\newcommand{\nc}{\newcommand}
\nc{\fg}{\mathfrak{f} } \nc{\vg}{\mathfrak{v} } \nc{\wg}{\mathfrak{w} }
\nc{\zg}{\mathfrak{z} } \nc{\ngo}{\mathfrak{n} } \nc{\kg}{\mathfrak{k} }
\nc{\mg}{\mathfrak{m} } \nc{\bg}{\mathfrak{b} } \nc{\ggo}{\mathfrak{g} }
\nc{\ggob}{\overline{\mathfrak{g}} } \nc{\sog}{\mathfrak{so} }
\nc{\sug}{\mathfrak{su} } \nc{\spg}{\mathfrak{sp} } \nc{\slg}{\mathfrak{sl} }
\nc{\glg}{\mathfrak{gl} } \nc{\cg}{\mathfrak{c} } \nc{\rg}{\mathfrak{r} }
\nc{\hg}{\mathfrak{h} } \nc{\tg}{\mathfrak{t} } \nc{\ug}{\mathfrak{u} }
\nc{\dg}{\mathfrak{d} } \nc{\ag}{\mathfrak{a} } \nc{\pg}{\mathfrak{p} }
\nc{\sg}{\mathfrak{s} } \nc{\affg}{\mathfrak{aff} }
\nc{\pca}{\mathcal{P}} \nc{\nca}{\mathcal{N}} \nc{\lca}{\mathcal{L}}
\nc{\oca}{\mathcal{O}} \nc{\mca}{\mathcal{M}} \nc{\tca}{\mathcal{T}}
\nc{\aca}{\mathcal{A}} \nc{\cca}{\mathcal{C}} \nc{\gca}{\mathcal{G}}
\nc{\sca}{\mathcal{S}} \nc{\hca}{\mathcal{H}} \nc{\bca}{\mathcal{B}}
\nc{\dca}{\mathcal{D}} \nc{\val}{\operatorname{val}}
\nc{\vp}{\varphi} \nc{\ddt}{\tfrac{{\rm d}}{{\rm d}t}}
\nc{\dpar}{\tfrac{\partial}{\partial t}} \nc{\im}{\mathtt{i}}
\nc{\SO}{\mathrm{SO}} \nc{\Spe}{\mathrm{Sp}} \nc{\Sl}{\mathrm{SL}}
\nc{\SU}{\mathrm{SU}} \nc{\Or}{\mathrm{O}} \nc{\U}{\mathrm{U}} \nc{\Gl}{\mathrm{GL}}
\nc{\Se}{\mathrm{S}} \nc{\Cl}{\mathrm{Cl}} \nc{\Spein}{\mathrm{Spin}}
\nc{\Pin}{\mathrm{Pin}} \nc{\G}{\mathrm{GL}_n(\RR)} \nc{\g}{\mathfrak{gl}_n(\RR)}
\nc{\RR}{{\Bbb R}} \nc{\HH}{{\Bbb H}} \nc{\CC}{{\Bbb C}} \nc{\ZZ}{{\Bbb Z}}
\nc{\FF}{{\Bbb F}} \nc{\NN}{{\Bbb N}} \nc{\QQ}{{\Bbb Q}} \nc{\PP}{{\Bbb P}}
\nc{\vs}{\vspace{.2cm}} \nc{\vsp}{\vspace{1cm}} \nc{\ip}{\langle\cdot,\cdot\rangle}
\nc{\ipp}{(\cdot,\cdot)} \nc{\la}{\langle} \nc{\ra}{\rangle} \nc{\unm}{\tfrac{1}{2}}
\nc{\unc}{\tfrac{1}{4}} \nc{\und}{\tfrac{1}{16}} \nc{\no}{\vs\noindent}
\nc{\lam}{\Lambda^2(\RR^n)^*\otimes\RR^n} \nc{\tangz}{{\rm T}^{\rm Zar}}
\nc{\nor}{{\sf n}}  \nc{\mum}{/\!\!/} \nc{\kir}{/\!\!/\!\!/}
\nc{\Ri}{\tfrac{4\Ric_{\mu}}{||\mu||^2}} \nc{\ds}{\displaystyle}
\nc{\ben}{\begin{enumerate}} \nc{\een}{\end{enumerate}} \nc{\f}{\frac}
\nc{\lb}{[\cdot,\cdot]} \nc{\isn}{\tfrac{1}{||v||^2}}
\nc{\Hess}{\operatorname{Hess}} \nc{\ad}{\operatorname{ad}}
\nc{\Ad}{\operatorname{Ad}} \nc{\rank}{\operatorname{rank}}
\nc{\Irr}{\operatorname{Irr}} \nc{\End}{\operatorname{End}}
\nc{\Aut}{\operatorname{Aut}} \nc{\Inn}{\operatorname{Inn}}
\nc{\Der}{\operatorname{Der}} \nc{\Ker}{\operatorname{Ker}}
\nc{\Iso}{\operatorname{I}} \nc{\Diff}{\operatorname{Diff}}
\nc{\Lie}{\operatorname{L}} \nc{\tr}{\operatorname{tr}} \nc{\dif}{\operatorname{d}}
\nc{\sen}{\operatorname{sen}} \nc{\modu}{\operatorname{mod}}
\nc{\Ric}{\operatorname{R}} \nc{\Ricci}{\operatorname{Ric}}
\nc{\sym}{\operatorname{sym}} \nc{\symac}{\operatorname{sym^{ac}}}
\nc{\symc}{\operatorname{sym^{c}}} \nc{\scalar}{\operatorname{sc}}
\nc{\grad}{\operatorname{grad}} \nc{\ricci}{\operatorname{Ric}}
\nc{\Nor}{\operatorname{Norm}} \nc{\riccic}{\operatorname{ric^{c}}}
\nc{\riccig}{\operatorname{ric^{\gamma}}} \nc{\Rin}{\operatorname{M}}
\nc{\Le}{\operatorname{L}} \nc{\tang}{\operatorname{T}}
\nc{\level}{\operatorname{level}} \nc{\rad}{\operatorname{r}}
\nc{\abel}{\operatorname{ab}} \nc{\CH}{\operatorname{CH}}
\nc{\mcc}{\operatorname{mcc}} \nc{\Adj}{\operatorname{Adj}}
\nc{\Order}{\operatorname{O}}
\theoremstyle{plain}
\newtheorem{theorem}{Theorem}[section]
\newtheorem{proposition}[theorem]{Proposition}
\newtheorem{corollary}[theorem]{Corollary}
\newtheorem{lemma}[theorem]{Lemma}
\theoremstyle{definition}
\newtheorem{definition}[theorem]{Definition}
\theoremstyle{remark}
\newtheorem{remark}[theorem]{Remark}
\newtheorem{example}[theorem]{Example}
\title{Ricci soliton solvmanifolds}
\author{Jorge Lauret}
\address{FaMAF and CIEM, Universidad Nacional de C\'ordoba, C\'ordoba, Argentina}
\email{lauret@famaf.unc.edu.ar}
\thanks{This research was partially supported by grants from CONICET (Argentina)
and SeCyT (Universidad Nacional de C\'ordoba)}
\begin{document}

\maketitle

\begin{abstract}
All known examples of nontrivial homogeneous Ricci solitons are left-invariant
metrics on simply connected solvable Lie groups whose Ricci operator is a multiple
of the identity modulo derivations (called {\it solsolitons}, and {\it nilsolitons}
in the nilpotent case).  The tools from geometric invariant theory used to study
Einstein solvmanifolds, turned out to be useful in the study of solsolitons as well.
We prove that, up to isometry, any solsoliton can be obtained via a very simple
construction from a nilsoliton $N$ together with any abelian Lie algebra of
symmetric derivations of its metric Lie algebra $(\ngo,\ip)$. The following
uniqueness result is also obtained: a given solvable Lie group can admit at most one
solsoliton up to isometry and scaling.  As an application, solsolitons of dimension
$\leq 4$ are classified.
\end{abstract}

\section{Introduction}\label{intro}

It has recently been proved by Lott in \cite{Ltt07} that if $g(t)$ is a Ricci flow
solution on a $3$-dimensional compact manifold $M$, with sectional curvatures that
are $\Order(t^{-1})$ and diameter that is $\Order(\sqrt{t})$, then the pullback
Ricci flow solution on the simply connected cover $\tilde{M}$ approaches a
homogeneous expanding Ricci soliton.  Among many others, this is certainly a good
motivation to study Ricci solitons in the homogeneous case.  A natural question we
are particularly interested in is how much stronger is, for homogeneous metrics, the
Einstein condition compared with the condition of being a Ricci soliton.

From results due to Ivey, Naber, Perelman and Petersen-Wylie, it follows that any
nontrivial homogeneous Ricci soliton must be noncompact, expanding and non-gradient
(see Section \ref{hrs}). Up to now, all known examples are isometric to a
left-invariant metric $g$ on a simply connected Lie group $G$, which when identified
with an inner product on the Lie algebra $\ggo$ of $G$ satisfies
\begin{equation}\label{intro1}
\Ricci(g)=cI+D, \qquad\mbox{for some}\quad c\in\RR,\quad D\in\Der(\ggo),
\end{equation}
where $\Ricci(g)$ is the Ricci operator of $g$.  On the other hand the converse is
true: any left invariant metric which satisfies (\ref{intro1}) is automatically a
Ricci soliton.  For $G$ nilpotent, these metrics are called {\it nilsolitons} and
have been extensively studied in the last decade, mainly because of the strong
connection with Einstein solvmanifolds (see the survey \cite{cruzchica}).  Examples
with $G$ solvable but non-nilpotent have explicitly appeared in \cite{BrdDnl}
($\dim{G}=3$) and \cite{IsnJckLu} ($\dim{G}=4$).

The aim of this paper is to study the structure of solvable Lie groups admitting a
left invariant metric for which (\ref{intro1}) holds; these metrics will be called
{\it solsolitons} from now on.  The tools from geometric invariant theory used in
\cite{standard} to prove that any Einstein solvmanifold is standard (see Section
\ref{Tb}), turned out to be useful in the study of solsolitons as well.

It is really easy to get examples of solsolitons from only a nilsoliton and an
abelian Lie algebra of symmetric derivations of its metric Lie algebra.  Our main
result asserts that any solsoliton can actually be obtained (up to isometry) by such
a simple construction (see Section \ref{structure}).  In particular, any solsoliton
is standard, and if not Einstein, it admits a one-dimensional extension which is an
Einstein solvmanifold, just as for nilsolitons.  We are therefore showing that most
of the structural results proved for Einstein solvmanifolds in \cite{Hbr,standard}
are still valid for solsolitons.  We also obtain a uniqueness result that
generalizes the known results for Einstein solvmanifolds and nilsolitons: among all
left invariant metrics on a given solvable Lie group, there is at most one
solsoliton up to isometry and scaling (see Section \ref{uniq}).  All this is used in
Section \ref{exa} to classify solvable Lie groups admitting solsolitons in dimension
$\leq 4$.

\vs \noindent {\it Acknowledgements.}  Part of this research was performed while
attending the Borel trimester `Ricci curvature and Ricci flow'. I am very grateful
to Institut Poincar\'e for their hospitality and to the organizers for inviting me.
I would also like to thank Luca Di Cerbo and Peng Lu for fruitful discussions on the
topic of this paper.

\section{Homogeneous Ricci solitons}\label{hrs}

A complete Riemannian metric $g$ on a differentiable manifold $M$ is said to be a
{\it Ricci soliton} if its Ricci tensor satisfies
\begin{equation}\label{defrs}
\ricci(g)=cg+\Lie_Xg, \qquad\mbox{for some}\quad c\in\RR, \quad
X\in\chi(M)\;\mbox{complete},
\end{equation}
where $\chi(M)$ denotes the space of all differentiable vector fields on $M$ and
$\Lie_X$ the Lie derivative (see \cite{libro} for further information on Ricci
solitons). Recall that if $\theta_t$ is the one-parameter group associated to $X$
then $\Lie_Xg=\ddt|_0\theta_t^*g$, and hence the Ricci soliton condition may be
rephrased as follows: $\ricci(g)$ is tangent at $g$ to the space of all metrics
which are {\it homothetic} (i.e. isometric up to scaling) to $g$.  If in addition
$X$ is the gradient field of a smooth function $f:M\longrightarrow\RR$, then
(\ref{defrs}) becomes $\ricci(g)=cg+2\Hess(f)$ and $g$ is called a {\it gradient}
Ricci soliton. In any case, we see that Ricci solitons are very natural
generalizations of {\it Einstein} metrics (i.e. $\ricci(g)=cg$).

The main significance, though, of the concept is that $g$ is a Ricci soliton if and
only if the curve of metrics
\begin{equation}\label{rssol}
g(t)=(-2ct+1)\vp_t^*g,
\end{equation}
is a solution to the Ricci flow
\begin{equation}\label{rf}
\dpar g(t)=-2\ricci(g(t)),
\end{equation}
for some one-parameter group $\vp_t$ of diffeomorphisms of $M$.  In other words, the
Ricci flow starting at $g$ stays forever in the space of metrics which are
homothetic to $g$; it is unable to `improve' $g$.  According to (\ref{rssol}), Ricci
solitons are called {\it expanding}, {\it steady}, or {\it shrinking} depending on
whether $c<0$, $c=0$, or $c>0$.

We are interested in the following question:

\begin{quote}
Which homogeneous manifolds $G/K$ admit a $G$-invariant Ricci soliton metric?
\end{quote}

Unfortunately, even Einstein homogeneous manifolds are still not well understood
(see \cite{BhmWngZll} and \cite{cruzchica} for the compact and noncompact cases,
respectively).  But let us first review to what extent the Ricci soliton condition
is weaker than the Einstein condition for homogeneous manifolds.

Let $(M,g)$ be a Ricci soliton and let us assume it is homogeneous, i.e. its
isometry group acts transitively on $M$.  In particular, $g$ has bounded curvature.
If $g$ is steady, it is easy to see from the ODE that the scalar curvature
$\scalar(g(t))$ satisfies that $\ricci(g)=0$, and consequently $g$ must be flat (see
\cite{AlkKml}).  In the shrinking case, it follows from \cite[Theorem 1.2]{Nbr} that
$g$ is of gradient type, and it is proved in \cite{PtrWyl} that any homogeneous
gradient Ricci soliton is isometric to a quotient of $N\times\RR^k$, where $N$ is
some homogeneous Einstein manifold with positive scalar curvature and so compact and
with $\Pi_1(N)$ finite (see also \cite{Wyl}).  Finally, if $g$ is expanding then $M$
must be noncompact (see \cite{Ivy}).  Recall also that it follows from \cite{Prl}
that on a compact manifold all Ricci solitons are of gradient type.

We conclude that,

\begin{quote}
the noncompact expanding case is the only one allowing nontrivial homogeneous Ricci
solitons, and furthermore, they can not be of gradient type.
\end{quote}

All known examples so far of nontrivial homogeneous Ricci solitons are isometric to
a left-invariant metric $g$ on a simply connected Lie group $G$ (see Remark \ref{sc}
concerning non-simply connected Lie groups), and can be obtained in the following
way. Assume that $g$, which is identified with an inner product on the Lie algebra
$\ggo$ of $G$, satisfies
\begin{equation}\label{rsD}
\Ricci(g)=cI+D, \qquad\mbox{for some}\quad c\in\RR,\quad D\in\Der(\ggo),
\end{equation}
where $\Ricci(g)$ also denotes the {\it Ricci operator} of $g$ (i.e.
$\ricci(g)=g(\Ricci(g)\cdot,\cdot)$).  If $X_D\in\chi(G)$ is defined by
\begin{equation}\label{defxd}
X_D(p)=\ddt|_0\vp_t(p), \qquad p\in G,
\end{equation}
where $\vp_t\in\Aut(G)$ is the unique automorphism such that
$\dif\vp_t|_e=e^{tD}\in\Aut(\ggo)$ (the existence of $\vp_t$ follows from $G$ being
simply connected), then
$$
\Lie_{X_D}g=\ddt|_0\vp_t^*g=\ddt|_0g(e^{-tD}\cdot,e^{-tD}\cdot) = -2g(D\cdot,\cdot).
$$
This implies that the Ricci tensor equals $\ricci(g)=cg-\unm\Lie_{X_{D}}g$, and
henceforth $g$ is a Ricci soliton.  These vector fields $X_D$'s can be viewed as a
generalization to any Lie group of the so called {\it linear vector fields} on
$\RR^n$ (i.e. $X(p)=Ap$, $A\in\glg_n(\RR)$), and they play a nice and important role
in control theory (see \cite{AylTir}).  We notice that for the Gaussian soliton on
$\RR^n$ one uses the linear vector field $X(p)=cp$.

Condition (\ref{rsD}) nicely combines the geometric and algebraic features of a
left-invariant metric on a Lie group, providing a neat way to find examples of
homogeneous Ricci solitons.  These examples first appeared in \cite{soliton} ($G$
nilpotent), \cite{BrdDnl} ($G$ solvable, $\dim{G}=3$) and \cite{IsnJckLu} ($G$
solvable, $\dim{G}=4$).

\begin{remark}
It is an open question whether any Ricci soliton left invariant metric will satisfy
(\ref{rsD}), and concerning existence, we do not know of any non-solvable Lie group
admitting a nontrivial Ricci soliton.
\end{remark}

In the case when $G$ is nilpotent, metrics for which (\ref{rsD}) holds are called
{\it nilsolitons} and are known to satisfy the following properties (see the recent
survey \cite{cruzchica} for further information on nilsolitons):

\begin{itemize}
\item[(a)] Any left invariant Ricci soliton on $G$ is a nilsoliton.

\item[(b)] A given $G$ can admit at most one nilsoliton up to isometry and scaling among all its left-invariant metrics.

\item[(c)] Nilsolitons are also characterized by the following extremal property:
$$
||\ricci(g)||=\min\left\{ ||\ricci(g')||: g'\;\mbox{left-invariant on $G$},
\;\scalar(g')=\scalar(g)\right\}.
$$
Furthermore, they are the critical points of the functional square norm of the Ricci
tensor on the space of all nilmanifolds of a given dimension and scalar curvature.

\item[(d)] Nilsolitons are precisely the nilpotent parts of Einstein solvmanifolds.
\end{itemize}
Nevertheless, the existence, structural and classification problems on nilsolitons
seem to be far from being satisfactory solved, if at all possible.

\begin{definition}\label{defsols}
A left-invariant metric $g$ on a simply connected solvable Lie group is called a
{\it solsoliton} if the corresponding Ricci operator satisfies (\ref{rsD}).
\end{definition}

The name is inspired by the $3$-dimensional homogeneous geometry Sol from the
Geometrization Conjecture.  It is natural to ask, in the case of solsolitons, for
properties analogous to (a)-(d) above.  We will consider properties (b) and (d) here
and leave (a) and (c) for a forthcoming paper. Concerning property (a), it is worth
mentioning that any left invariant Ricci soliton on a {\it completely solvable} Lie
group (i.e. the eigenvalues of any $\ad{X}$ are all real) is necessarily a
solsoliton.  This follows analogously to the proof of \cite[Proposition
1.1]{soliton} by using that two left-invariant metrics on one of these groups are
isometric if and only if there is an isomorphism which is an isometry between them
(see \cite{Alk}).

\section{Variety of nilpotent Lie algebras}\label{Tb}

Let $G$ be a real reductive group acting linearly on a finite dimensional real
vector space $V$ via $(g,v)\mapsto g.v$, $g\in G,v\in V$. The precise setting is the
one in \cite{RchSld}. We also refer to \cite{EbrJbl, HnzSchStt}, where many results from
geometric invariant theory are adapted and proved over $\RR$. The Lie algebra $\ggo$
of $G$ also acts linearly on $V$ by the derivative of the above action, which will
be denoted by $(\alpha,v)\mapsto\pi(\alpha)v$, $\alpha\in\ggo$, $v\in V$. We
consider a Cartan decomposition $\ggo=\kg\oplus\pg$, where $\kg$ is the Lie algebra
of a maximal compact subgroup $K$ of $G$. Endow $V$ with a from now on fixed
$K$-invariant inner product $\ip$ such that $\pg$ acts by symmetric operators, and
endow $\pg$ with an $\Ad(K)$-invariant inner product, which will be also denoted by
$\ip$.

The function $m:V\smallsetminus\{ 0\}\longrightarrow\pg$ implicitly defined by
\begin{equation}\label{defmm}
\la m(v),\alpha\ra=\isn\la\pi(\alpha)v,v\ra, \qquad \forall\alpha\in\pg,\; v\in V,
\end{equation}

\noindent is called the {\it moment map} for the representation $V$ of $G$.  It is
easy to see that $m(cv)=m(v)$ for any nonzero $c\in\RR$ and $m$ is $K$-equivariant:
$m(k.v)=\Ad(k)m(v)$ for all $k\in K$.  In the complex case (i.e. for a complex
representation of a complex reductive algebraic group), under the natural
identifications $\pg=\pg^*=(\im\kg)^*=\kg^*$, the function $m$ is precisely the
moment map from symplectic geometry, corresponding to the Hamiltonian action of $K$
on the symplectic manifold $\PP V$ (see \cite[Chapter 8]{Mmf} for further
information).

The functional square norm of the moment map,
\begin{equation}\label{norm}
F:V\smallsetminus\{ 0\}\mapsto\RR, \qquad  F(v)=||m(v)||^2,
\end{equation}
is scaling invariant, so it can actually be viewed as a function on any sphere of
$V$ or on the projective space $\PP V$.  If $\cca$ denotes the set of critical
points of $F:V\smallsetminus\{ 0\}\longrightarrow\RR$, then it is proved in \cite{Mrn}
(see \cite{Krw1,Nss} for the complex case) that $v\in\cca$ (or equivalently, $v$ is
a minimum for $F|_{G.v}$) if and only if $v$ is an eigenvector of $\pi(m(v))$, and
in that case, the following uniqueness result holds: $G.v\cap\cca=K.v$ (up to
scaling).  This was previously proved in \cite{RchSld} (see \cite{KmpNss} for the
complex case) for the zeroes of $m$ (or equivalently, minimal vectors), which can
only appear inside closed orbits (see \cite[Section 11]{cruzchica} for a more
complete overview).

Let us consider the space of all skew-symmetric algebras of dimension $n$, which is
parameterized by the vector space
$$
V:=\lam=\{\mu:\RR^n\times\RR^n\longrightarrow\RR^n : \mu\; \mbox{bilinear and
skew-symmetric}\}.
$$
Then
$$
\nca=\{\mu\in V:\mu\;\mbox{satisfies Jacobi and is nilpotent}\}
$$
is an algebraic subset of $V$ as the Jacobi identity and the nilpotency condition
can both be written as zeroes of polynomial functions.  $\nca$ is often called the
{\it variety of nilpotent Lie algebras} (of dimension $n$).  There is a natural
linear action of $G=\G$ on $V$ given by
\begin{equation}\label{action}
g.\mu(X,Y)=g\mu(g^{-1}X,g^{-1}Y), \qquad X,Y\in\RR^n, \quad g\in\G,\quad \mu\in V.
\end{equation}

Recall that $\nca$ is $\G$-invariant and the Lie algebra isomorphism classes are
precisely the $\G$-orbits. The action of $\g$ on $V$ obtained by differentiation of
(\ref{action}) is given by
\begin{equation}\label{actiong}
\pi(\alpha)\mu=\alpha\mu(\cdot,\cdot)-\mu(\alpha\cdot,\cdot)-\mu(\cdot,\alpha\cdot),
\qquad \alpha\in\g,\quad\mu\in V.
\end{equation}

We note that $\pi(\alpha)\mu=0$ if and only if $\alpha\in\Der(\mu)$, the Lie algebra
of derivations of the algebra $\mu$.  The canonical inner product $\ip$ on $\RR^n$
determines an $\Or(n)$-invariant inner product on $V$, also denoted by $\ip$, as
follows:
\begin{equation}\label{innV}
\la\mu,\lambda\ra= \sum\la\mu(e_i,e_j),\lambda(e_i,e_j)\ra
=\sum\la\mu(e_i,e_j),e_k\ra\la\lambda(e_i,e_j),e_k\ra,
\end{equation}

\noindent and also the standard $\Ad(\Or(n))$-invariant inner product on $\g$ given
by
\begin{equation}\label{inng}
\la \alpha,\beta\ra=\tr{\alpha \beta^{\mathrm t}}=\sum\la\alpha e_i,\beta e_i\ra
=\sum\la\alpha e_i,e_j\ra\la\beta e_i,e_j\ra, \qquad \alpha,\beta\in\g,
\end{equation}

\noindent where $\{ e_1,...,e_n\}$ denotes the canonical basis of $\RR^n$.  We have
made several abuses of notation concerning inner products.  Recall that $\ip$ has
been used to denote inner products on $\RR^n$, $V$ and $\g$ indistinctly.  We note
that $\pi(\alpha)^t=\pi(\alpha^t)$ and $(\ad{\alpha})^t=\ad{\alpha^t}$ for any
$\alpha\in\g$, due to the choice of these canonical inner products everywhere.

We use $\g=\sog(n)\oplus\sym(n)$ as a Cartan decomposition for the Lie algebra $\g$
of $\G$, where $\sog(n)$ and $\sym(n)$ denote the subspaces of skew-symmetric and
symmetric matrices, respectively.  It is proved in \cite[Proposition 3.5]{minimal}
that if $\ad_{\mu}$ denotes left multiplication for the algebra $\mu$ then the
moment map $m:V\smallsetminus\{ 0\}\longrightarrow\sym(n)$ for the action (\ref{action})
is given by
$$
m(\mu)=\tfrac{1}{||\mu||^2} \left(-2\sum(\ad_{\mu}{e_i})^t\ad_{\mu}{e_i} +
\sum\ad_{\mu}{e_i}(\ad_{\mu}{e_i})^t\right),
$$
or equivalently, for all $X\in\RR^n$,
\begin{equation}\label{mmv}
\la m(\mu)X,X\ra=\tfrac{1}{||\mu||^2}\left(-2\sum\la\mu(X,e_i),e_j\ra^2
+\sum\la\mu(e_i,e_j),X\ra^2\right).
\end{equation}

Let $\tg$ denote the set of all diagonal $n\times n$ matrices.  If $\{
e_1',...,e_n'\}$ is the basis of $(\RR^n)^*$ dual to the canonical basis $\{
e_1,...,e_n\}$ then
$$
\{ v_{ijk}=(e_i'\wedge e_j')\otimes e_k : 1\leq i<j\leq n, \; 1\leq k\leq n\}
$$
is a basis of weight vectors of $V$ for the action (\ref{action}), where $v_{ijk}$
is actually the bilinear form on $\RR^n$ defined by
$v_{ijk}(e_i,e_j)=-v_{ijk}(e_j,e_i)=e_k$ and zero otherwise.  The corresponding
weights $\alpha_{ij}^k\in\tg$, $i<j$, are given by
\begin{equation}\label{alfas}
\pi(\alpha)v_{ijk}=(a_k-a_i-a_j)v_{ijk}=\la\alpha,\alpha_{ij}^k\ra v_{ijk},
\quad\forall\alpha=\left[\begin{smallmatrix} a_1&&\\ &\ddots&\\ &&a_n
\end{smallmatrix}\right]\in\tg,
\end{equation}

\noindent where $\alpha_{ij}^k=E_{kk}-E_{ii}-E_{jj}$ and $\ip$ is the inner product
defined in (\ref{inng}).  As usual $E_{rs}$ denotes the matrix with $1$ at entry
$rs$ and $0$ elsewhere.

From now on, we will always denote by $\mu_{ij}^k$ the structure constants of a
vector $\mu\in V$ with respect to the basis $\{ v_{ijk}\}$:
$$
\mu=\sum\mu_{ij}^kv_{ijk}, \qquad \mu_{ij}^k\in\RR, \qquad {\rm i.e.}\quad
\mu(e_i,e_j)=\sum_{k=1}^n\mu_{ij}^ke_k, \quad i<j.
$$
Each nonzero $\mu\in V$ uniquely determines an element $\beta_{\mu}\in\tg$ given by
$$
\beta_{\mu}:=\mcc\left\{\alpha_{ij}^k:\mu_{ij}^k\ne 0\right\},
$$
where $\mcc(X)$ denotes the unique element of minimal norm in the convex hull
$\CH(X)$ of a subset $X\subset\tg$.  We note that $\beta_{\mu}$ is always nonzero
since $\tr{\alpha_{ij}^k}=-1$ for all $i<j$ and consequently $\tr{\beta_{\mu}}=-1$.

Let $\tg^+$ denote the Weyl chamber of $\g$ given by
\begin{equation}\label{weyl}
\tg^+=\left\{\left[\begin{smallmatrix} a_1&&\\ &\ddots&\\ &&a_n
\end{smallmatrix}\right]\in\tg:a_1\leq...\leq a_n\right\}.
\end{equation}

In \cite{standard}, a $\G$-invariant stratification for $V=\lam$ has been defined by
adapting to this context the construction given in \cite[Section 12]{Krw1} for
reductive group representations over an algebraically closed field.  We summarize in
the following theorem the main properties of the stratification, which will become
our main tool in the study of the structure of solsolitons in next section (see
\cite[Section 7]{cruzchica} for a detailed overview).

\begin{theorem}\label{strata}\cite{standard, einsteinsolv}
There exists a finite subset $\bca\subset\tg^+$, and for each $\beta\in\bca$ a
$\G$-invariant subset $\sca_{\beta}\subset V$ (a {\it stratum}) such that
$$
V\smallsetminus\{ 0\}=\bigcup_{\beta\in\bca}\sca_{\beta} \qquad \mbox{(disjoint
union)},
$$
and $\tr{\beta}=-1$ for any $\beta\in\bca$.  For $\mu\in\sca_{\beta}$ we have that
\begin{equation}\label{betapos}
\beta+||\beta||^2I \quad\mbox{is positive definite for all}\; \beta\in\bca
\;\mbox{such that}\; \sca_{\beta}\cap\nca\ne\emptyset,\;\mbox{and}
\end{equation}
\begin{equation}\label{bmu}
||\beta||\leq ||m(\mu)||\qquad(\mbox{here, equality holds}\;\Leftrightarrow
m(\mu)\;\mbox{is conjugate to}\; \beta).
\end{equation}

If in addition, $\mu\in\sca_{\beta}$ satisfies $\beta_{\mu}=\beta$ (or equivalently,
$\min\left\{\la\beta,\alpha_{ij}^k\ra:\mu_{ij}^k\ne 0\right\}=||\beta||^2$), which
always holds for some $g.\mu$, $g\in\Or(n)$, then 
\begin{equation}\label{adbeta}
\left\la[\beta,D],D\right\ra\geq 0 \qquad\forall\; D\in\Der(\mu) \qquad(\mbox{here,
equality holds}\;\Leftrightarrow [\beta,D]=0),
\end{equation}
\begin{equation}\label{betaort}
\tr{\beta D}=0 \quad\forall\; D\in\Der(\mu), \;\mbox{and}
\end{equation}
\begin{equation}\label{delta}
\left\la\pi\left(\beta+||\beta||^2I\right)\mu,\mu\right\ra\geq 0 \qquad (\mbox{here,
equality holds}\;\Leftrightarrow \beta+||\beta||^2I\in\Der(\mu)).
\end{equation}
\end{theorem}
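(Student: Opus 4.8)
The plan is to establish the stratification properties by leveraging the Kirwan-Ness theory summarized at the start of this section, specifically the characterization of critical points of $F=\|m\|^2$ and the general theory of strata for reductive group representations from \cite{Krw1}. The element $\beta\in\bca$ plays the role of the "optimal destabilizing vector" in the sense of the Kempf-Ness / Kirwan construction: for each $\mu\in V\smallsetminus\{0\}$, the associated $\beta_\mu$ is the closest point to the origin in the convex hull of the weights occurring in $\mu$, and the stratum $\sca_\beta$ collects those $\mu$ whose flow under the negative gradient of $F$ limits onto the stratum indexed by $\beta$. The finiteness of $\bca$ and the disjoint stratification follow directly from the cited construction in \cite{standard,einsteinsolv}, so the real content lies in verifying the six displayed inequalities (\ref{betapos})--(\ref{delta}) from the defining geometry of $\beta$ and the moment map formula (\ref{mmv}).

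\emph{First I would} record the elementary facts about the weights. Since $\tr{\alpha_{ij}^k}=-1$ for all admissible triples, any convex combination has trace $-1$, giving $\tr{\beta_\mu}=-1$ and hence $\tr{\beta}=-1$ for every $\beta\in\bca$; this is already noted in the excerpt. The defining minimality property of $\beta_\mu=\mcc\{\alpha_{ij}^k:\mu_{ij}^k\ne0\}$ as the minimal-norm point of a convex hull yields, by the standard variational characterization, that $\la\beta,\alpha_{ij}^k\ra\geq\|\beta\|^2$ whenever $\mu_{ij}^k\ne0$, with the minimum attained; this is the parenthetical equivalent condition in the hypothesis. For the inequality (\ref{bmu}), I would use that $\|m(\mu)\|\geq\|\beta\|$ because $\beta$ is, by construction, a lower bound for the moment map norm along the stratum (this is precisely the Ness-type estimate transplanted to $V=\lam$), with equality exactly when $m(\mu)$ is conjugate to $\beta$. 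For (\ref{betapos}), the positivity of $\beta+\|\beta\|^2I$ on strata meeting $\nca$ reflects the nilpotency constraint: nilpotent $\mu$ force the weight combinations into a region where the minimal-norm vector has all eigenvalues exceeding $-\|\beta\|^2$, so I would argue this by combining $\tr{\beta}=-1$ with the structure of the weights $\alpha_{ij}^k=E_{kk}-E_{ii}-E_{jj}$ entering a nilpotent bracket.

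\emph{For the derivation-related inequalities} (\ref{adbeta})--(\ref{delta}), the key is that derivations are characterized by $\pi(D)\mu=0$ (stated after (\ref{actiong})), and $\beta$ is diagonal so its interaction with $D\in\Der(\mu)$ can be analyzed weight-by-weight. Writing $D$ in components against the weight basis, the bracket $[\beta,D]$ acts diagonally with eigenvalue $\la\beta,\alpha_{ij}^k\ra-\|\beta\|^2\geq0$ on the relevant components (using the minimality inequality above), from which $\la[\beta,D],D\ra\geq0$ follows, with equality iff $D$ has no component in the positive-eigenvalue directions, i.e. $[\beta,D]=0$; this gives (\ref{adbeta}). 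The orthogonality (\ref{betaort}), $\tr{\beta D}=0$, I would obtain by observing that $\beta$ lies in the minimal-norm position so it is orthogonal to the tangent directions of the convex hull realized by derivations, equivalently by pairing the derivation equation $\pi(D)\mu=0$ against $\mu$ and using $\la\pi(\beta)\mu,\mu\ra$ relations. Finally (\ref{delta}) would come from expanding $\la\pi(\beta+\|\beta\|^2I)\mu,\mu\ra=\sum_{\mu_{ij}^k\ne0}(\la\beta,\alpha_{ij}^k\ra+\|\beta\|^2)(\mu_{ij}^k)^2$ via (\ref{alfas}); wait---more carefully, $\pi(\beta)v_{ijk}=\la\beta,\alpha_{ij}^k\ra v_{ijk}$, so $\la\pi(\beta+\|\beta\|^2I)\mu,\mu\ra=\sum(\la\beta,\alpha_{ij}^k\ra+\|\beta\|^2)(\mu_{ij}^k)^2$ is not quite right in sign, and I would instead write it as $\sum(\la\beta,\alpha_{ij}^k\ra-\|\beta\|^2)(\mu_{ij}^k)^2$ being nonnegative by the minimality inequality, vanishing iff every occurring weight satisfies $\la\beta,\alpha_{ij}^k\ra=\|\beta\|^2$, which is exactly the condition $\beta+\|\beta\|^2I\in\Der(\mu)$.

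\emph{The hard part will be} (\ref{betaort}) and the equality case of (\ref{adbeta}), because these encode the subtle fact that $\beta$ commutes with the full derivation algebra rather than merely with individual derivations, and this rests on the deeper Kirwan-Ness structure theory (the parabolic subgroup and Levi decomposition associated to $\beta$) rather than on the elementary weight bookkeeping that settles the other inequalities. I would handle this by invoking the general machinery of \cite{Krw1} as adapted in \cite{standard}: the stratum $\sca_\beta$ is built so that its elements have their "instability" concentrated in the eigenspaces of $\ad\beta$, and derivations---being infinitesimal symmetries of $\mu$---must preserve this eigenspace decomposition, forcing $\tr{\beta D}=0$. Rather than reprove this structure theory from scratch, the cleanest route is to cite the relevant propositions from \cite{standard, einsteinsolv} for the stratification's defining properties and then verify the inequalities as consequences, which keeps the argument self-contained modulo the already-established geometric invariant theory.
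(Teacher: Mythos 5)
First, a structural point: this paper contains \emph{no} proof of Theorem \ref{strata}. It is stated as a citation of \cite{standard, einsteinsolv}, with \cite[Section 7]{cruzchica} indicated for a detailed overview, and is used in Section \ref{structure} purely as a black box. So there is no internal argument to compare yours against; the only meaningful comparison is with the cited construction, and your fallback strategy (take the existence, finiteness and disjointness of the strata from Kirwan's theory as adapted in \cite{standard}, then verify the displayed properties) is consistent with that. Several of your verifications are also correct and match the cited arguments: $\tr{\beta}=-1$ by convexity of the trace on the weights, the variational characterization $\la\beta,\alpha_{ij}^k\ra\geq||\beta||^2$ for the weights occurring in $\mu$ when $\beta_\mu=\beta$, and your (self-corrected) expansion of (\ref{delta}) as a sum of terms $(\la\beta,\alpha_{ij}^k\ra-||\beta||^2)(\mu_{ij}^k)^2\geq 0$, with equality precisely when $\pi(\beta+||\beta||^2I)\mu=0$, i.e. $\beta+||\beta||^2I\in\Der(\mu)$.

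The genuine gaps are in (\ref{adbeta}), (\ref{betaort}) and (\ref{betapos}). For (\ref{adbeta}) you assert that $[\beta,D]$ ``acts diagonally with eigenvalue $\la\beta,\alpha_{ij}^k\ra-||\beta||^2$ on the relevant components.'' This conflates the weights of the $\G$-representation $V$ (eigenvalues of $\pi(\beta)$ on the $v_{ijk}$) with the roots of $\g$ (eigenvalues of $\ad{\beta}$ on the matrix units $E_{rs}$, which are the differences $b_r-b_s$ of the diagonal entries of $\beta$). A derivation $D$ is a matrix, not a vector in $V$, and
\[
\la[\beta,D],D\ra=\sum_{r,s}(b_r-b_s)D_{rs}^2
\]
has no sign in general; its nonnegativity on $\Der(\mu)$ is exactly the nontrivial assertion that $\Der(\mu)$ lies in the parabolic subalgebra spanned by the nonnegative eigenspaces of $\ad{\beta}$, which in \cite{standard} comes from the Kempf--Kirwan uniqueness theory (the stabilizer of $\mu$ is contained in the optimal destabilizing parabolic), not from the minimality inequality. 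You do concede this in your ``hard part'' paragraph, but the derivation you actually display would fail as written. Similarly, your argument for (\ref{betaort}) is only valid for diagonal derivations: if $D$ is diagonal, then $\pi(D)\mu=0$ forces $\la D,\alpha_{ij}^k\ra=0$ for all occurring weights, hence $\la D,\beta\ra=0$; for a general $D\in\Der(\mu)$ this weight pairing is unavailable, and the cited proof again goes through the parabolic/Levi structure of the stratum. Finally, (\ref{betapos}) is not a consequence of $\tr{\beta}=-1$ together with the shape of the weights $\alpha_{ij}^k$: it is a substantive theorem of \cite{einsteinsolv} whose proof genuinely uses nilpotency of the elements of $\sca_{\beta}\cap\nca$ (adapting the basis to the descending central series so that the occurring weights are positively graded), and your one-sentence sketch does not identify where nilpotency enters --- note the statement is false without the hypothesis $\sca_{\beta}\cap\nca\neq\emptyset$.
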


This stratification is based on instability results and is strongly related to the
moment map in many ways other than (\ref{bmu})  (see \cite{cruzchica}).

\section{Structure of solsolitons}\label{structure}

Let $S$ be a {\it solvmanifold}, that is, a simply connected solvable Lie group
endowed with a left invariant Riemannian metric.  $S$ will be often identified with
its metric Lie algebra $(\sg,\ip)$, where $\sg$ is the Lie algebra of $S$ and $\ip$
denotes the inner product on $\sg$ which determines the metric.  We consider the
orthogonal decomposition
$$
\sg=\ag\oplus\ngo,
$$
where $\ngo$ is the nilradical of $\sg$ (i.e. maximal nilpotent ideal).  The {\it
mean curvature vector} of $S$ is the only element $H\in\ag$ such that $\la
H,A\ra=\tr{\ad{A}}$ for any $A\in\ag$.  If $B$ denotes the symmetric map defined by
the Killing form of $\sg$ relative to $\ip$ (i.e. $\la BX,X\ra=\tr{(\ad{X})^2}$ for
all $X\in\sg$) then $B(\ag)\subset\ag$ and $B|_{\ngo}=0$.  The Ricci operator
$\Ricci$ of $S$ is given by (see for instance \cite[7.38]{Bss}):
\begin{equation}\label{ricci}
\Ricci=R-\unm B-S(\ad{H}),
\end{equation}

\noindent where $S(\ad{H})=\unm(\ad{H}+(\ad{H})^t)$ is the symmetric part of
$\ad{H}$ and $R$ is the symmetric operator defined by
\begin{equation}\label{R}
\la RX,X\ra=-\unm\sum\la [X,X_i],X_j\ra^2 +\unc\sum\la
[X_i,X_j],X\ra^2,\qquad\forall X\in\sg,
\end{equation}

\noindent where $\{ X_i\}$ is any orthonormal basis of $(\sg,\ip)$.

It follows from (\ref{mmv}) that this anonymous tensor $R$ in the formula of the
Ricci operator satisfies
\begin{equation}\label{mmR}
m([\cdot,\cdot])=\tfrac{4}{||[\cdot,\cdot]||^2}R,
\end{equation}

\noindent where $m:\Lambda^2\sg^*\otimes\sg\longrightarrow\sym(\sg)$ is the moment
map for the natural action of $\Gl(\sg)$ on $\Lambda^2\sg^*\otimes\sg$.  In other
words, $R$ may be alternatively defined as follows:
\begin{equation}\label{Rmm}
\tr{RE}=\unc\la \pi(E)[\cdot,\cdot],[\cdot,\cdot]\ra, \qquad\forall E\in\End(\sg),
\end{equation}

\noindent where we are considering the Lie bracket $[\cdot,\cdot]$ of $\sg$ as a
vector in $\Lambda^2\sg^*\otimes\sg$, $\ip$ is the inner product defined in
(\ref{innV}) and $\pi$ is the representation given in (\ref{actiong}) (see the
notation in Section \ref{Tb} and replace $\RR^n$ by $\sg$).

\begin{remark}\label{st2}
In particular, $R$ is orthogonal to any derivation of $\sg$.  It is easy to see that
the same holds for $B$.
\end{remark}

\begin{remark}\label{st1}
If $\sg$ is nilpotent then $\Ricci= R$ and hence the scalar curvature is simply
given by $\scalar=\tr{R}=-\unc ||\lb||^2$.
\end{remark}

The following more explicit formula for the Ricci operator of $S$ follows from a
straightforward computation by using (\ref{ricci}) and (\ref{R}):
\begin{equation}\label{ricgen}
\begin{array}{rcl}
\la\Ricci A,A\ra &=& -\unm\sum ||[A,A_i]||^2 -\tr{S(\ad{A}|_{\ngo})^2}, \\ \\

\la\Ricci A,X\ra &=& -\unm\sum\la[A,A_i],[X,A_i]\ra-\unm\tr{(\ad{A}|_{\ngo})^t\ad{X}|_{\ngo}} \\ \\ && -\unm\la[H,A],X\ra, \\ \\
\la\Ricci X,X\ra &=& \unc\sum\la[A_i,A_j],X\ra^2
+\unm\sum\la[\ad{A_i}|_{\ngo},(\ad{A_i}|_{\ngo})^t](X),X\ra \\ \\ && -\unm\sum\la
[X,X_i],X_j\ra^2 +\unc\sum\la [X_i,X_j],X\ra^2 -\la[H,X],X\ra,
\end{array}
\end{equation}

\noindent for all $A\in\ag$ and $X\in\ngo$, where $\{ A_i\}$, $\{ X_i\}$, are any
orthonormal basis of $\ag$ and $\ngo$, respectively, and
$S(\ad{A}|_{\ngo})=\unm(\ad{A}|_{\ngo}+(\ad{A}|_{\ngo})^t)$. It is now clear from
(\ref{ricgen}) that there is a substantial simplification of the expression of
$\Ricci$ under the assumptions $[\ag,\ag]=0$ and $\ad{A}$ symmetric for all
$A\in\ag$.  This gives rise to the following natural construction of solsolitons
starting from a nilsoliton.

\begin{proposition}\label{const}
Let $(\ngo,\ip_1)$ be a nilsoliton, say with Ricci operator $\Ricci_1=cI+D_1$,
$c<0$, $D_1\in\Der(\ngo)$, and consider $\ag$ any abelian Lie algebra of symmetric
derivations of $(\ngo,\ip_1)$.  Then the solvmanifold $S$ with Lie algebra
$\sg=\ag\oplus\ngo$ (semidirect product) and inner product given by
$$
\ip|_{\ngo\times\ngo}=\ip_1, \qquad \la\ag,\ngo\ra=0, \qquad \la
A,A\ra=-\tfrac{1}{c}\tr{A^2} \qquad\forall A\in\ag,
$$
is a solsoliton with $\Ricci=cI+D$, where $D\in\Der(\sg)$ is defined by
$D|_{\ag}=0$, $D|_{\ngo}=D_1-\ad{H}|_{\ngo}$ and $H$ is the mean curvature vector of
$S$. Furthermore, $S$ is Einstein if and only if $D_1\in\ag$.
\end{proposition}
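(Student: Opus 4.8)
The plan is to verify directly that the inner product described on $\sg=\ag\oplus\ngo$ satisfies $\Ricci=cI+D$ with the claimed $D$, by specializing the general Ricci formula \eqref{ricgen} to the present situation and then checking that $D$ is a derivation. The two structural hypotheses—that $\ag$ is abelian and acts on $\ngo$ by symmetric derivations—are exactly the conditions $[\ag,\ag]=0$ and $\ad{A}|_{\ngo}=(\ad{A}|_{\ngo})^t$ that the author already flags as producing a ``substantial simplification'' of \eqref{ricgen}, so I expect the three blocks of that formula to collapse considerably. First I would record the consequences of the hypotheses: since $\ad{A}|_{\ngo}$ is symmetric, $S(\ad{A}|_{\ngo})=\ad{A}|_{\ngo}$ and the terms involving the skew part vanish; since $\ag$ is abelian, $[A,A_i]=0$, killing the sums $\sum\|[A,A_i]\|^2$ and $\sum\la[A_i,A_j],X\ra^2$. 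I would also note that on $\ngo$ the bracket of $\sg$ restricts to the bracket of $(\ngo,\ip_1)$, so the last two $R$-type terms in $\la\Ricci X,X\ra$ reproduce exactly $\la R_1 X,X\ra$ for the nilpotent metric Lie algebra $(\ngo,\ip_1)$, which by Remark \ref{st1} equals its Ricci operator.

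The core computation is then to assemble these pieces into a single symmetric operator and exhibit it as $cI+D$. On the $\ag$ block, the surviving term is $-\tr{(\ad{A}|_{\ngo})^2}=-\|\ad{A}|_{\ngo}\|^2$; the normalization $\la A,A\ra=-\tfrac1c\tr{A^2}$ is precisely what is needed to turn this into $c\la A,A\ra$, so that $\Ricci|_{\ag}=cI$ and hence $D|_{\ag}=0$. On the $\ngo$ block, combining the nilsoliton identity $\Ricci_1=cI+D_1$ with the extra term $-\la[H,X],X\ra=-\la(\ad{H}|_{\ngo})X,X\ra$ (and using that $H\in\ag$, so $\ad{H}|_{\ngo}$ is symmetric) yields $\Ricci|_{\ngo}=cI+D_1-\ad{H}|_{\ngo}$, matching the prescribed $D|_{\ngo}$. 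The off-diagonal $\la\Ricci A,X\ra$ block is where I would be most careful: after dropping the $[A,A_i]$ term one is left with the trace term and the mean-curvature term $-\unm\la[H,A],X\ra$, and I expect symmetry of the $\ad{A}|_{\ngo}$ together with the defining property of $H$ to force this to vanish, so that $\sg=\ag\oplus\ngo$ is an orthogonal eigenspace-type decomposition for the soliton structure.

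It then remains to check that the operator $D$ with $D|_{\ag}=0$, $D|_{\ngo}=D_1-\ad{H}|_{\ngo}$ is genuinely a derivation of $\sg$. Here $\ad{H}|_{\ngo}$ is an inner derivation of $\ngo$, and $D_1\in\Der(\ngo)$, so $D_1-\ad{H}|_{\ngo}\in\Der(\ngo)$; the content is to verify the mixed bracket relations $D[A,X]=[DA,X]+[A,DX]$ for $A\in\ag$, $X\in\ngo$. Since $D|_{\ag}=0$ and $[A,X]=(\ad{A})X\in\ngo$, this reduces to checking that $D_1-\ad{H}|_{\ngo}$ commutes appropriately with each $\ad{A}|_{\ngo}$, which should follow from $D_1$ and each $\ad{A}$ being derivations of $\ngo$ together with $\ag$ being abelian; this compatibility is the main obstacle I anticipate. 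Finally, for the Einstein criterion, $\Ricci=cI+D$ is $cI$ exactly when $D=0$, i.e. $D|_{\ngo}=D_1-\ad{H}|_{\ngo}=0$; I would argue that this happens precisely when $D_1=\ad{H}|_{\ngo}$, and using Remark \ref{st2} (that $D_1$, being essentially an $R$-term, is orthogonal to derivations, or rather comparing $D_1$ against the abelian action) identify this with the condition $D_1\in\ag$, i.e. $D_1$ acting as one of the prescribed symmetric derivations coming from $\ag$.
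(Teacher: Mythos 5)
Your skeleton coincides with the paper's proof (specialize (\ref{ricgen}) under $[\ag,\ag]=0$ and symmetry of the maps $\ad{A}|_{\ngo}$, read off the two diagonal blocks, then verify $D\in\Der(\sg)$ and the Einstein equivalence), and your treatment of the diagonal blocks is correct. But at the three genuinely nontrivial points you only record an expectation, and at the most important one the reason you propose is false. For the derivation check you assert that $[D_1,\ad{A}|_{\ngo}]=0$ ``should follow from $D_1$ and each $\ad{A}$ being derivations of $\ngo$ together with $\ag$ being abelian.'' That cannot work: two derivations of a Lie algebra need not commute, and abelianness of $\ag$ only gives commutation of the operators in $\ag$ with one another, not with $D_1$. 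The missing idea is that $D_1$ is not an arbitrary derivation but equals $\Ricci_1-cI$, and that any derivation of $(\ngo,\ip_1)$ whose transpose is also a derivation (in particular every $A\in\ag$) commutes with $\Ricci_1$; this is \cite[Lemma 2.2]{Hbr}, which the paper invokes exactly at this step. Its proof needs the moment-map identity (\ref{Rmm}): writing $\mu$ for the bracket of $\ngo$, for any skew-symmetric $S$ one has
$$
\tr{[\Ricci_1,A]S}=\tr{\Ricci_1[A,S]}=\unc\la\pi([A,S])\mu,\mu\ra
=\unc\la\pi(S)\mu,\pi(A)^t\mu\ra=0,
$$
since $\pi(A)\mu=0$ and $\pi(A)^t=\pi(A^t)=\pi(A)$; as $[\Ricci_1,A]$ is skew-symmetric, it must vanish. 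Without this (or an equivalent) argument your proof does not close.

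The other two gaps are smaller but real. First, in the off-diagonal block, after $[\ag,\ag]=0$ kills the term $-\unm\sum\la[A,A_i],[X,A_i]\ra$ and (because $H\in\ag$) the term $-\unm\la[H,A],X\ra$, you are left with $-\unm\tr{(\ad{A}|_{\ngo})^t\ad{X}|_{\ngo}}$, and symmetry of $\ad{A}|_{\ngo}$ alone does not make this vanish: the trace of a symmetric operator against a nilpotent one is nonzero in general. The paper's argument is structural: a symmetric derivation preserves each summand $\ngo_i$ of the orthogonal decomposition refining the descending central series of $\ngo$, while $\ad{X}$ maps $\ngo_i$ into $\ngo_{i+1}\oplus\cdots\oplus\ngo_r$, so the product is strictly block-triangular and traceless. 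Second, in the Einstein statement, the implication $D_1\in\ag\Rightarrow D_1=\ad{H}|_{\ngo}$ is not formal: it uses $\tr{\Ricci_1A}=0$ for all $A\in\ag$ (Remarks \ref{st1} and \ref{st2}) together with the normalization $\la A,A\ra=-\tfrac{1}{c}\tr{A^2}$, which combine to give $\tr{(D_1-\ad{H}|_{\ngo})A}=0$ for every $A\in\ag$; taking $A=D_1-\ad{H}|_{\ngo}\in\ag$ then forces $D_1=\ad{H}|_{\ngo}$. Your phrase ``comparing $D_1$ against the abelian action'' points in this direction but is not an argument. (A terminological slip, not affecting correctness: $\ad{H}|_{\ngo}$ is not an inner derivation of $\ngo$, since $H\notin\ngo$; it is the element $H\in\ag\subset\Der(\ngo)$.)
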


\begin{remark} The aim of this section is to show that this very simple procedure
actually yields all solsolitons up to isometry.
\end{remark}

\begin{remark} If $\ngo$ is abelian then $\Ricci_1=0$ and so we can take $D_1=-cI$ for any $c<0$.
\end{remark}

\begin{proof}
It follows directly from the hypotheses and (\ref{ricgen}) that $\Ricci|_{\ag}=cI$
and
$$
\Ricci|_{\ngo}=\Ricci_1-H=cI+D_1-\ad{H}|_{\ngo}.
$$
If $\ngo=\ngo_1\oplus...\oplus\ngo_r$ is the orthogonal decomposition with
$[\ngo,\ngo]=\ngo_2\oplus...\oplus\ngo_r$,
$[\ngo,[\ngo,\ngo]]=\ngo_3\oplus...\oplus\ngo_r$, and so on, then for any $A\in\ag$,
$\ad{A}|_{\ngo}=A$ leaves the subspaces $\ngo_i$ invariant as it is a symmetric
derivation (it is actually enough for this that $(\ad{A}|_{\ngo})^t$ be a derivation
as well) and $\ad{X}(\ngo_i)\subset\ngo_{i+1}\oplus...\oplus\ngo_r$ for all $i$.
This implies that $\tr{(\ad{A}|_{\ngo})^t\ad{X}|_{\ngo}}=0$, and thus
$\la\Ricci\ag,\ngo\ra=0$.  It only remains to prove that $D\in\Der(\sg)$, for which
it is enough to show that $[\ag,D_1]=0$, but this follows by using that any
symmetric derivation of $(\ngo,\ip_1)$ (and more generally any derivation whose
transpose is also a derivation) commutes with $\Ricci_1$ (see \cite[Lemma
2.2]{Hbr}).

Let us now prove the Einstein assertion.  If $S$ is Einstein then
$D_1=\ad{H}|_{\ngo}=H\in\ag$.  Conversely, if $D_1\in\ag$, then since
$\tr{\Ricci_1A}=0$ for any $A\in\ag$ (see Remarks \ref{st1} and \ref{st2}) we have
that
$$
\tr{D_1A}=-c\tr{A}=-c\tr{\ad{A}}=-c\la H,A\ra =\tr{HA}=\tr{\ad{H}|_{\ngo}A},
\qquad\forall A\in\ag.
$$
This implies that $D_1=\ad{H}|_{\ngo}$ (i.e. $\Ricci=cI$), completing the proof of
the proposition.
\end{proof}

We now show that $c\geq 0$ is actually not allowed for nontrivial solsolitons.  This
gives an alternative proof of the fact that any nonflat solsoliton must be expanding
(see Section \ref{hrs}).

\begin{proposition}\label{cneg}
Let $S$ be a solsoliton, say with $\Ricci=cI+D$, $c\in\RR$, $D\in\Der(\sg)$. If
$c\geq 0$ then $\Ricci=0$.
\end{proposition}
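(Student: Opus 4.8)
The plan is to deduce the statement from two facts: (a) every solvmanifold has $\scalar=\tr\Ricci\leq 0$, and (b) a solsoliton satisfies the scaling identity $||\Ricci||^2=c\,\scalar$. Granting these, the proposition is immediate: if $c\geq 0$ then $0\leq ||\Ricci||^2=c\,\scalar\leq 0$, hence $\Ricci=0$. For (a) I would simply sum (\ref{ricgen}) over orthonormal bases $\{A_j\}$ of $\ag$ and $\{X_k\}$ of $\ngo$; the $\ag$-contribution $-\unm\sum||[A_i,A_j]||^2$ combines with the $\ngo$-contribution $\unc\sum||[A_i,A_j]||^2$, the trace of a commutator drops out, and one is left with $\scalar=-\unc\sum||[A_i,A_j]||^2-\sum_j||S(\ad{A_j}|_{\ngo})||^2-\unm||[\cdot,\cdot]_{\ngo}||^2-||H||^2$, a sum of manifestly nonpositive terms. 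This is routine bookkeeping.

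The substance is (b). First I would isolate the pointwise input $\la\Ricci H,H\ra=-||S(\ad{H})||^2$: evaluating the first line of (\ref{ricgen}) at $A=H$ and splitting $\ad{H}$ into its $\ag\to\ngo$ block $A\mapsto[H,A]$ and its $\ngo\to\ngo$ block $\ad{H}|_{\ngo}$, one checks that $\unm\sum||[H,A_i]||^2+||S(\ad{H}|_{\ngo})||^2$ is exactly $||S(\ad{H})||^2$. Next, since $D$ is a derivation one has $\ad{(DH)}=[D,\ad{H}]$, whose trace therefore vanishes; as $\tr\ad{Z}=\la H,Z\ra$ for every $Z\in\sg$, this gives $\la H,DH\ra=0$, and $D$ symmetric then yields $\la DH,H\ra=0$. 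Consequently $c||H||^2=\la\Ricci H,H\ra=-||S(\ad{H})||^2$, that is, $c||H||^2+||S(\ad{H})||^2=0$.

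Finally I would pair the soliton equation $cI+D=R-\unm B-S(\ad{H})$ with $\ad{H}$ and with $D$ in the trace form, using that $R$ and $B$ are orthogonal to every derivation (Remark \ref{st2}) and that $\tr(S(\ad{H})\ad{H})=||S(\ad{H})||^2$. Pairing with $\ad{H}$ gives $c||H||^2+\tr(D\,\ad{H})=-||S(\ad{H})||^2$, so $\tr(D\,\ad{H})=0$ by the relation just obtained; pairing with $D$ then gives $\tr(\Ricci D)=-\tr(S(\ad{H})D)=-\tr(D\,\ad{H})=0$, whence $||\Ricci||^2=\tr(\Ricci(cI+D))=c\,\scalar$. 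I expect the main obstacle to be precisely this cancellation of the cross term $\tr(S(\ad{H})D)$: the estimate in (a) alone only yields $\scalar\leq 0$, and it is the identity $c||H||^2+||S(\ad{H})||^2=0$ — forced by $\la\Ricci H,H\ra=-||S(\ad{H})||^2$ together with $DH\perp H$ — that upgrades the soft inequality to the sharp conclusion $\Ricci=0$.
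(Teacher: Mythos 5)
Your proof is correct, and it takes a genuinely different route from the paper's. The paper argues at the operator level: since $D=\Ricci-cI$ is a symmetric derivation, $D|_{\ag}=0$ and $D(\ngo)\subset\ngo$; the first line of (\ref{ricgen}) then gives $c\la A,A\ra=-\unm\sum||[A,A_i]||^2-\tr{S(\ad{A}|_{\ngo})^2}\leq 0$ for all $A\in\ag$, so $c\geq 0$ forces $c=0$, $[\ag,\ag]=0$ and every $\ad{A}|_{\ngo}$ skew-symmetric, after which the third line of (\ref{ricgen}) identifies $\Ricci|_{\ngo}$ with the Ricci operator $\Ricci_1$ of the nilpotent part; then $D|_{\ngo}=\Ricci_1$ is a derivation of $\ngo$, and Remarks \ref{st1} and \ref{st2} force $\tr{\Ricci_1^2}=0$. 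You instead work entirely with traces: the Milnor-type inequality $\scalar\leq 0$ for solvmanifolds (cf.\ \cite{Mln}), together with the orthogonality $\tr(\Ricci D)=0$, yields $||\Ricci||^2=\tr(\Ricci(cI+D))=c\,\scalar\leq 0$. I checked your key steps and they are sound: the block computation $||S(\ad{H})||^2=\unm\sum||[H,A_i]||^2+||S(\ad{H}|_{\ngo})||^2$, the identity $\ad(DH)=[D,\ad{H}]$ giving $\la H,DH\ra=\tr{\ad(DH)}=0$, the pairing of $\Ricci=R-\unm B-S(\ad{H})$ against the derivation $\ad{H}$ (using Remark \ref{st2} and $\tr(S(\ad{H})\ad{H})=||S(\ad{H})||^2$) to conclude $\tr(D\ad{H})=0$, and the pairing against $D$ (where $D^{\mathrm t}=D$ is needed to turn $\tr(S(\ad{H})D)$ into $\tr(D\ad{H})$) to conclude $\tr(\Ricci D)=0$. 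Your route is close in spirit not to the paper's proof of this proposition but to identity (\ref{c}) in the proof of Theorem \ref{main}, where the derivation $\ad{H}+D$ is paired against (\ref{einstein}). What your approach buys is uniformity and a reusable identity: the case $\ag=0$ needs no separate treatment (there the paper's first-line deduction of $c=0$ has no content and one must fall back on the classical nilsoliton argument via Remarks \ref{st1} and \ref{st2}), and $||\Ricci||^2=c\,\scalar$ is of independent interest. What the paper's proof buys is finer structure along the way ($c=0$, $\ag$ abelian and acting skew-symmetrically, $H=0$), exhibiting such a solsoliton as flat-type rather than merely concluding $\Ricci=0$. Two cosmetic remarks: with the all-pairs convention of (\ref{innV}) (the one consistent with Remark \ref{st1}), the nilpotent term in your scalar curvature formula should carry coefficient $\unc$, not $\unm$ — immaterial for the sign; and $\la DH,H\ra=\la H,DH\ra$ holds by symmetry of $\ip$ alone, so the appeal to the symmetry of $D$ at that point is redundant.
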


\begin{proof}
Since $D$ is a symmetric derivation we have that $D|_{\ag}=0$, $D(\ngo)\subset\ngo$.
It then follows from just the first line in (\ref{ricgen}) that $c=0$, $[\ag,\ag]=0$
and $\ad{A}^t=-\ad{A}$ for any $A\in\ag$.  We also obtain from (\ref{ricgen}) that
$D|_{\ngo}=\Ricci_1$, the Ricci operator of $(\ngo,\ip_{\ngo\times\ngo})$, and so
$\Ricci_1=0$ by Remarks \ref{st1} and \ref{st2}, concluding the proof.
\end{proof}

We will need in what follows the following technical lemma valid for general
solvmanifolds.

\begin{lemma}\label{adA}
Let $S$ be a solvmanifold.  Then, for any $A\in\ag$, the following conditions are
equivalent:
\begin{itemize}
\item[(i)] $(\ad{A})^t$ is a derivation of $\sg$.

\item[(ii)] $\ad{A}$ is a normal operator (i.e. $[\ad{A},(\ad{A})^t]=0$).
\end{itemize}
\end{lemma}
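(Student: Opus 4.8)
The plan is to prove the two implications separately, after recording the block structure forced by the nilradical. Since $\ngo$ is the nilradical, $[\sg,\sg]\subseteq\ngo$, so $\ad{A}(\sg)\subseteq\ngo$ for every $A\in\ag$; writing operators in block form with respect to $\sg=\ag\oplus\ngo$, this says the ``$\ag$-row'' of $\ad{A}$ vanishes. Moreover, for $a\in\ag$ we have $\la(\ad{A})^ta,X\ra=\la a,[A,X]\ra=0$ for all $X\in\sg$, because $[A,X]\in\ngo=\ag^\perp$; hence $(\ad{A})^t|_{\ag}=0$ as well. I set $Q:=\ad{A}|_{\ngo}$ and $P:=\ad{A}|_{\ag}\colon\ag\to\ngo$, so that by a direct block computation condition (ii) amounts to $P=0$ together with $[Q,Q^t]=0$.

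For (i)$\Rightarrow$(ii), the key observation is that if $(\ad{A})^t$ is a derivation then so is the symmetric operator $S:=\unm(\ad{A}+(\ad{A})^t)$, being a sum of derivations. Now any derivation preserves the nilradical $\ngo$ (a characteristic ideal), and since $S$ is symmetric it then also preserves $\ngo^\perp=\ag$. On the other hand $S(a)=\unm\ad{A}(a)\in\ngo$ for $a\in\ag$ (using $(\ad{A})^t|_{\ag}=0$), so $S(\ag)\subseteq\ag\cap\ngo=0$; this forces $\ad{A}|_{\ag}=P=0$. With $P=0$ the operator $(\ad{A})^t$ maps $\ngo$ into $\ngo$ and kills $\ag$, so applying the derivation identity for $(\ad{A})^t$ to the pair $(A,Z)$ with $Z\in\ngo$ gives $Q^tQZ=(\ad{A})^t[A,Z]=[A,(\ad{A})^tZ]=QQ^tZ$, i.e. $[Q,Q^t]=0$, whence $[\ad{A},(\ad{A})^t]=0$. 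I expect this implication to be the main obstacle: the nontrivial point is precisely that $S$ being a derivation, via the characteristic nilradical, rules out the off-diagonal block $P$. Without that step the bracket computation on $(A,Z)$ only yields the weaker relations $P^tQ=0$ and $Q^tQ=PP^t+QQ^t$.

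For (ii)$\Rightarrow$(i), I would argue by complexification. Assume $\ad{A}$ is normal. Then on $\sg_{\CC}$ it admits an orthogonal decomposition $\sg_{\CC}=\bigoplus_{\lambda}V_{\lambda}$ into eigenspaces, on which the transpose $(\ad{A})^t$, extended to $\sg_\CC$, acts as multiplication by $\overline{\lambda}$. Since $\ad{A}$ is a derivation, these eigenspaces are graded by the bracket, $[V_{\lambda},V_{\mu}]\subseteq V_{\lambda+\mu}$; therefore for $X\in V_{\lambda}$ and $Y\in V_{\mu}$ one computes $(\ad{A})^t[X,Y]=\overline{\lambda+\mu}\,[X,Y]=[(\ad{A})^tX,Y]+[X,(\ad{A})^tY]$, so $(\ad{A})^t$ is a derivation of $\sg_{\CC}$ and hence of $\sg$. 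This direction is routine once the spectral picture of the normal operator is combined with the eigenspace grading coming from the derivation property of $\ad{A}$.
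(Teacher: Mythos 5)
Your proof is correct, but both implications are argued along genuinely different lines than the paper's. For (ii) $\Rightarrow$ (i), the paper does not complexify: it uses the moment-map identity (\ref{Rmm}) to get
$$
\tr{R[\ad{A},(\ad{A})^t]}=\unc\la\pi(\ad{A})\pi((\ad{A})^t)\lb,\lb\ra=\unc||\pi((\ad{A})^t)\lb||^2,
$$
using $\pi(\ad{A})\lb=0$ and $\pi(\alpha)^t=\pi(\alpha^t)$; normality kills the left-hand side, forcing $(\ad{A})^t\in\Der(\sg)$. Your spectral-theorem argument is equally valid, and like the paper's it proves the general fact that the transpose of a normal derivation of any metric Lie algebra is again a derivation; what it does not produce is the quantitative trace identity itself, which the paper reuses verbatim later in the proof of Theorem \ref{main} (see (\ref{trRadA})). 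For (i) $\Rightarrow$ (ii), you and the paper both reduce to showing $\ad{A}|_{\ag}=0$ and then run the identical computation $(\ad{A})^t[A,X]=[A,(\ad{A})^tX]$ for $X\in\ngo$; the difference is how the off-diagonal block $P$ is killed. The paper quotes the Gordon--Wilson lemma that the image of any derivation of a solvable Lie algebra lies in the nilradical \cite[Lemma 2.6]{GrdWls}, applied to $(\ad{A})^t$, whereas you use only that $\ngo$ is a characteristic ideal: the symmetric derivation $S(\ad{A})=\unm(\ad{A}+(\ad{A})^t)$ preserves $\ngo$, hence also $\ag=\ngo^{\perp}$, while mapping $\ag$ into $\ngo$. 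This is more elementary and self-contained, at the cost of not recording the stronger structural fact the citation provides. One small correction to your commentary: the ``weaker relations'' $P^tQ=0$ and $Q^tQ=PP^t+QQ^t$ are not actually weaker --- taking traces in the second one gives $\tr{PP^t}=0$, hence $P=0$, so the bracket computation on pairs $(A,Z)$ alone already yields normality, and the step you single out as the main obstacle could have been bypassed entirely.
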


\begin{proof}
It follows from (\ref{Rmm}) that
$$
\begin{array}{rcl}
\tr{R[\ad{A},(\ad{A})^t]} &=& \unc\la\pi([\ad{A},(\ad{A})^t])[\cdot,\cdot],[\cdot,\cdot]\ra \\ \\
&=& \unc\la\pi(\ad{A})\pi((\ad{A})^t)[\cdot,\cdot],[\cdot,\cdot]\ra \\ \\
&=& \unc\la\pi((\ad{A})^t)[\cdot,\cdot],\pi(\ad{A})^t[\cdot,\cdot]\ra \\ \\
&=& \unc||\pi((\ad{A})^t)[\cdot,\cdot]||^2,
\end{array}
$$
and so if $\ad{A}$ is normal then $\pi((\ad{A})^t)\lb=0$, that is,
$(\ad{A})^t\in\Der(\sg)$.

Conversely, if $(\ad{A})^t$ is a derivation of $\sg$ then both $\ad{A}$ and
$(\ad{A})^t$ must vanish on $\ag$, since they leave $\ngo$ invariant and their
images are contained in $\ngo$ (this last statement is well-known to be true for any
derivation, see for instance \cite[Lemma 2.6]{GrdWls}).  This implies that
$$
(\ad{A})^t([A,X])=[(\ad{A})^t(A),X]+[A,(\ad{A})^t(X)]= [A,(\ad{A})^t(X)],
\qquad\forall X\in\ngo,
$$
which is equivalent to saying that $[\ad{A},(\ad{A})^t]=0$.
\end{proof}

The following structural theorem for solsolitons is the main result of this paper.

\begin{theorem}\label{main}
Let $S$ be a solvmanifold with metric Lie algebra $(\sg,\ip)$ and consider the
orthogonal decomposition $\sg=\ag\oplus\ngo$, where $\ngo$ is the nilradical of
$\sg$.  Then $\Ricci=cI+D$ for some $c<0$, $D\in\Der(\sg)$, i.e. $S$ is a
solsoliton, if and only if the following conditions hold:

\begin{itemize}
\item[(i)] $(\ngo,\ip|_{\ngo\times\ngo})$ is a nilsoliton with Ricci
operator $\Ricci_1=cI+D_1$, for some $D_1\in\Der(\ngo)$.

\item[(ii)] $[\ag,\ag]=0$.

\item[(iii)] $(\ad{A})^t\in\Der(\sg)$ (or equivalently,
$[\ad{A},(\ad{A})^t]=0$) for all $A\in\ag$.

\item[(iv)] $\la A,A\ra=-\tfrac{1}{c}\tr{S(\ad{A})^2}$ for all $A\in\ag$,
where $S(\ad{A})=\unm(\ad{A}+(\ad{A})^t)$.
\end{itemize}
\end{theorem}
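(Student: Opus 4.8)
The plan is to prove the two implications separately. The forward implication $(i)$--$(iv)\Rightarrow\Ricci=cI+D$ is a curvature computation refining Proposition \ref{const}, the only novelty being that $\ad{A}$ is assumed merely normal, not symmetric. First I would record that, by $(ii)$ and $(iii)$, the family $\{\ad{A}|_{\ngo}:A\in\ag\}$ consists of commuting normal operators, hence is simultaneously diagonalizable; consequently each $\ad{A}|_{\ngo}$, its transpose $(\ad{A})^t|_{\ngo}$ (a derivation by Lemma \ref{adA}) and the symmetric part $S(\ad{A})$ all commute, and all preserve the canonical orthogonal grading $\ngo=\ngo_1\oplus\dots\oplus\ngo_r$ attached to the descending central series. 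Substituting $(ii)$ and $(iii)$ into (\ref{ricgen}) then annihilates the terms $\sum\la[A_i,A_j],X\ra^2$ and $\sum\la[\ad{A_i},(\ad{A_i})^t]X,X\ra$, while the grading forces $\tr{(\ad{A}|_{\ngo})^t\ad{X}|_{\ngo}}=0$. Together with $(iv)$ --- which is precisely the assertion $\la\Ricci A,A\ra=c\la A,A\ra$ --- this gives $\Ricci|_{\ag}=cI$, $\la\Ricci\ag,\ngo\ra=0$ and $\Ricci|_{\ngo}=(cI+D_1)-S(\ad{H}|_{\ngo})$. Putting $D|_{\ag}=0$ and $D|_{\ngo}=D_1-S(\ad{H}|_{\ngo})$ yields $\Ricci=cI+D$; and $D\in\Der(\sg)$ because $D_1$ and $S(\ad{H}|_{\ngo})$ lie in $\Der(\ngo)$ and commute with every $\ad{A}|_{\ngo}$ (normal derivations commute with $\Ricci_1$ by \cite[Lemma 2.2]{Hbr}, and the simultaneous diagonalization handles the $S(\ad{H})$ term).

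For the converse, suppose $\Ricci=cI+D$ with $c<0$, $D\in\Der(\sg)$. Since $\Ricci$ is symmetric so is $D$; being a derivation of a solvable algebra its image lies in $\ngo$ and it preserves $\ngo$ (see \cite[Lemma 2.6]{GrdWls}), and symmetry then forces $D(\ag)\subseteq\ag\cap\ngo=0$. Hence $D|_{\ag}=0$, giving $\Ricci|_{\ag}=cI$ and $\la\Ricci\ag,\ngo\ra=0$ at the outset. Comparing $\Ricci|_{\ngo}=cI+D|_{\ngo}$ with the third line of (\ref{ricgen}) yields $\Ricci_1=cI+D|_{\ngo}+S(\ad{H}|_{\ngo})-P-Q$, where $P$ and $Q$ are the symmetric operators determined by $\la PX,X\ra=\unc\sum\la[A_i,A_j],X\ra^2$ and $\la QX,X\ra=\unm\sum\la[\ad{A_i}|_{\ngo},(\ad{A_i}|_{\ngo})^t]X,X\ra$. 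Thus as soon as $(ii)$ (so $P=0$) and $(iii)$ (so $Q=0$ and, by Lemma \ref{adA}, $(\ad{H})^t\in\Der(\sg)$, whence $S(\ad{H}|_{\ngo})\in\Der(\ngo)$) are known, I obtain $\Ricci_1=cI+D_1$ with $D_1=D|_{\ngo}+S(\ad{H}|_{\ngo})\in\Der(\ngo)$, which is $(i)$; and the first line of (\ref{ricgen}) together with $(ii)$ gives $c\la A,A\ra=-\tr{S(\ad{A})^2}$, which is $(iv)$. The whole weight of the theorem therefore rests on establishing $(ii)$ and $(iii)$.

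This is exactly the standardness-type statement proved for Einstein solvmanifolds in \cite{Hbr,standard}, and I would attack it with the moment-map machinery of Section \ref{Tb}, which I expect to be the main obstacle. By (\ref{ricci}) and (\ref{mmR}) the soliton equation becomes $R=cI+D+\unm B+S(\ad{H})$ with $m([\cdot,\cdot])=\tfrac{4}{||[\cdot,\cdot]||^2}R$, and Remark \ref{st2} gives $\tr{RE}=\tr{BE}=0$ for every $E\in\Der(\sg)$, so that $c\tr{E}+\tr{DE}=-\tr{S(\ad{H})E}$; pairing against derivations thus isolates the $S(\ad{H})$ contribution. The plan is to view the bracket of $(\sg,\ip)$ as a point $\mu$ in the variety of Section \ref{Tb}, locate it in a stratum $\sca_{\beta}$ of Theorem \ref{strata} after replacing $\mu$ by some $g.\mu$, $g\in\Or(\dim\sg)$, so that $\beta_{\mu}=\beta$, and then combine the norm bound (\ref{bmu}) with the derivation identities (\ref{adbeta})--(\ref{delta}). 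Since $D$ and $\ad{H}$ are derivations, $\tr{\beta D}=0$ and $\la[\beta,D],D\ra\geq0$ pin down the interaction of $\beta$ with the soliton data, and I expect equality in (\ref{bmu}) together with (\ref{delta}) to force the two non-negative obstructions $\sum||[A_i,A_j]||^2$ and $\sum||[\ad{A_i},(\ad{A_i})^t]||^2$ to vanish, delivering $[\ag,\ag]=0$ and the normality of each $\ad{A}$. Once this instability/stratification step is in place, the reductions above produce $(i)$ and $(iv)$ and close the argument.
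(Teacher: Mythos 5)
Your reductions at both ends are sound and agree with the paper's: the forward implication is handled exactly as in Proposition \ref{const} (with normality replacing symmetry), and in the converse the symmetry of $D$ forces $D|_{\ag}=0$, so that (i) and (iv) do follow from (\ref{ricgen}) once (ii) and (iii) are known. The genuine gap is precisely where you place ``the whole weight of the theorem'': your plan for (ii) and (iii) is not an argument, and as sketched it would fail. First, you propose to feed the \emph{full} bracket of $\sg$ into the stratification, whereas the paper applies Theorem \ref{strata} to $\mu:=\lb|_{\ngo\times\ngo}$, viewed in the variety $\nca$ of \emph{nilpotent} Lie algebras of dimension $n=\dim\ngo$. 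This is not cosmetic: the positivity (\ref{betapos}) of $\beta+||\beta||^2I$ is guaranteed only for strata meeting $\nca$, and the stratum of a solvable non-nilpotent bracket need not be one of these; yet that positivity is exactly what converts the vanishing of $\sum\la(\beta+||\beta||^2I)[A_i,A_j],[A_i,A_j]\ra$ into $[\ag,\ag]=0$. Second, you ``expect equality in (\ref{bmu})'' to force the obstructions to vanish, but equality in (\ref{bmu}), i.e.\ $m(\mu)=\beta$, is a \emph{conclusion} of the proof (Proposition \ref{extras} (iii)), not an available input, and nothing in your sketch produces it. The actual mechanism is absent from your proposal: one sets $F:=S(\ad{H})+D$, derives $c\tr{F}+\tr{F^2}=0$ by testing (\ref{einstein}) against the derivation $\ad{H}+D$, introduces $E_{\beta}$ with $E_{\beta}|_{\ag}=0$, $E_{\beta}|_{\ngo}=\beta+||\beta||^2I$, decomposes $\la\pi(E_{\beta})\lb,\lb\ra$ into the three nonnegative terms of Lemma \ref{est}, and combines this with $\tr{E_{\beta}^2}=||\beta||^2\tr{E_{\beta}}$ and $\tr{FE_{\beta}}=||\beta||^2\tr{F}$ (the latter from (\ref{betaort})) to get the reversed Cauchy--Schwarz inequality $\tr{F^2}\,\tr{E_{\beta}^2}\leq(\tr{FE_{\beta}})^2$, which forces $F=tE_{\beta}$ and hence the vanishing of the three terms.

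Even granting that vanishing, your claim that it delivers (iii) is incorrect: the three terms only give $[\ag,\ag]=0$, $[\beta,\ad{A_i}|_{\ngo}]=0$ and $\beta+||\beta||^2I\in\Der(\ngo)$ --- normality of $\ad{A}$ is not among them. In the paper, (iii) needs a further, separate argument: from $F|_{\ngo}=-\tfrac{c}{||\beta||^2}E_{\beta}$ and (\ref{ricgen}) one gets $\unm\sum[\ad{A_i},(\ad{A_i})^t]+\Ricci_1=-\tfrac{c}{||\beta||^2}\beta$, and then the chain
$$
\tfrac{1}{8}\sum||\pi((\ad{A_i})^t)\lb||^2+\tr{\Ricci_1^2}
=\unc||\mu||^2\tr{\Ricci_1\beta}
\leq\tfrac{1}{16}||\mu||^4\,||m(\mu)||\,||\beta||
\leq\tr{\Ricci_1^2},
$$
which uses (\ref{bmu}) and $m(\mu)=\tfrac{4}{||\mu||^2}\Ricci_1$, forces $\pi((\ad{A_i})^t)\lb=0$, i.e.\ (iii); only after that does (\ref{cID}) yield (i). Finally, your route says nothing about the case $\ngo$ abelian, where $\mu=0$ lies in no stratum and the paper argues instead directly from (\ref{c}) and (\ref{c2}). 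In short, you have correctly identified the target and the right toolbox, but the decisive steps --- the choice of $\mu$, the operator $E_{\beta}$, the trace identity for $F$, the reversed Cauchy--Schwarz inequality, and the separate inequality chain for (iii) --- are missing or misdirected, so the proof does not go through as proposed.
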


\begin{proof}
If conditions (i)-(iv) are satisfied by $S$ then we can argue exactly as in the
proof of Proposition \ref{const} to obtain that $\Ricci=cI+D$, where $D$ is defined
by $D|_{\ag}=0$, $D|_{\ngo}=D_1-S(\ad{H}|_{\ngo})$, $H$ the mean curvature vector of
$S$.

Let us then prove the converse assertion.  Let $S$ be a solvmanifold with
$\Ricci=cI+D$, $c<0$, $D\in\Der(\sg)$.  If $F:=S(\ad{H})+D$ then we obtain from
(\ref{ricci}) and (\ref{Rmm}) that
\begin{equation}\label{einstein}
\tr{\left(cI+\unm B+F\right)E}= \unc\la\pi(E)\lb,\lb\ra, \qquad\forall
E\in\End(\sg).
\end{equation}

By letting $E=\ad{H}+D$ in (\ref{einstein}) and using Remark \ref{st2} we get
\begin{equation}\label{c}
c\tr{F}+\tr{F^2}=0.
\end{equation}
In particular, $\tr{F}\geq 0$ and equality holds if and only if $F=0$.  Also, by
applying (\ref{einstein}) to $E$ defined by $E|_{\ag}=0$ and $E|_{\ngo}=I$, it is
easy to see that
\begin{equation}\label{c2}
cn+\tr{F}=\unc\sum||[A_i,A_j]||^2+\unc||\lb_{\ngo\times\ngo}||^2, \qquad
n=\dim{\ngo}.
\end{equation}

We first consider the case when $\ngo$ is abelian.  It follows from (\ref{c2}) that
$cn+\tr{F}\geq 0$, and so by (\ref{c}) we get that
$\tr{F^2}\leq\tfrac{1}{n}(\tr{F})^2$. This implies that $[\ag,\ag]=0$, since
equality must hold in (\ref{c2}), and also that $F|_{\ngo}=tI$ for some $t\geq 0$,
but therefore $F|_{\ag}=0$ and $F|_{\ngo}=-cI$. We now obtain from (\ref{ricgen})
that the restrictions to $\ngo$ satisfy
$$
cI+D = \unm\sum[\ad{A_i},(\ad{A_i})^t]-S(\ad{H}),
$$
from which it follows that $\sum[\ad{A_i},(\ad{A_i})^t]=0$.  By arguing as in the
proof of Lemma \ref{adA} we get that
\begin{equation}\label{trRadA}
0=\tr\left(R\sum[\ad{A_i},(\ad{A_i})^t]\right)=\unc\sum||\pi((\ad{A_i})^t)\lb||^2,
\end{equation}

\noindent and therefore (iii) follows.

Let us assume from now on that $\ngo$ is not abelian.  In order to apply the results
in Section \ref{Tb}, we identify $\ngo$ with $\RR^n$ via an orthonormal basis $\{
e_1,...,e_n\}$ of $\ngo$.  In this way, $\mu:=\lb|_{\ngo\times\ngo}$ can be viewed
as a nonzero element of $\nca\subset V$.  Thus $\mu\in\sca_{\beta}$ for some
$\beta\in\bca\subset\tg^+$  and there exists $g\in\Or(n)$ such that
$\tilde{\mu}:=g.\mu$ satisfies $\beta_{\tilde{\mu}}=\beta$, so that in addition
(\ref{betaort}) and (\ref{delta}) hold for $\tilde{\mu}$ (see Theorem \ref{strata}).
Let $\tilde{g}$ denote the orthogonal map of $(\sg,\ip)$ defined by
$\tilde{g}|_{\ag}=I$, $\tilde{g}|_{\ngo}=g$, and let $\tilde{\sg}$ be the Lie
algebra which is $\sg$ as a vector space and has Lie bracket
$$
\tilde{g}.[\cdot,\cdot]=\tilde{g}[\tilde{g}^{-1}\cdot,\tilde{g}^{-1}\cdot].
$$
We therefore have that $(\tilde{\sg},\ip)$ is isometric to $(\sg,\ip)$, as
$\tilde{g}:\ggo\longrightarrow\tilde{\ggo}$ is an isometric isomorphism between the
two metric Lie algebras. Since conditions (i)-(iv) hold for $(\tilde{\sg},\ip)$ if
and only if they hold for $(\sg,\ip)$, we can assume from now on that all properties
(\ref{adbeta})-(\ref{delta}) in Theorem \ref{strata} hold for $\mu$.

Consider $E_{\beta}\in\End(\sg)$ defined by
$$
E_{\beta}:=\left[\begin{array}{cc} 0&0\\
0&\beta+||\beta||^2I\end{array}\right], \qquad \mbox{that is}, \quad
E_{\beta}|_{\ag}=0, \quad E_{\beta}|_{\ngo}=\beta+||\beta||^2I.
$$

\begin{lemma}\label{est}
$\la\pi(E_{\beta})\lb,\lb\ra\geq 0$ and it equals the sum of the following three
nonnegative terms:
\begin{equation}\label{ests}
\begin{array}{l}
 \unc\sum\la(\beta+||\beta||^2I)[A_i,A_j],[A_i,A_j]\ra \\ \\
 + \unm\sum\la [\beta,\ad{A_i}],\ad{A_i}\ra \\ \\
 + \unc\la\pi(\beta+||\beta||^2I)\mu,\mu\ra.
\end{array}
\end{equation}
\end{lemma}

\begin{proof}
If $\{ A_r\}$ is an orthonormal basis of $\ag$ then
$$
\begin{array}{rcl}
\la\pi(E_{\beta})\lb,\lb\ra &=& \unc\sum\la E_{\beta}[A_r,A_s],[A_r,A_s]\ra \\ \\
&&+ \unm\sum\la E_{\beta}[A_r,e_i],[A_r,e_i]\ra - \unm\sum\la
[A_r,E_{\beta}e_i],[A_r,e_i]\ra, \\ \\
&& + \unc\sum\la E_{\beta}[e_i,e_j]-[E_{\beta}e_i,e_j]
-[e_i,E_{\beta}e_j],[e_i,e_j]\ra,
\end{array}
$$
which in turn equals
\begin{equation}\label{prueba2}
\begin{array}{l}
\unc\sum\la(\beta+||\beta||^2I)[A_r,A_s],[A_r,A_s]\ra \\ \\
 + \unm\sum\la (\beta\ad{A_r}-\ad{A_r}\beta)(e_i),\ad{A_r}(e_i)\ra
 +\unc\la\pi(\beta+||\beta||^2I)\mu,\mu\ra,
\end{array}
\end{equation}

\noindent and so (\ref{ests}) follows.  The three terms in (\ref{ests}) are $\geq 0$
by (\ref{betapos}), (\ref{adbeta}) and (\ref{delta}), respectively.
\end{proof}

We therefore obtain, by applying (\ref{einstein}) to $E=E_{\beta}$ and using Lemma
\ref{est}, that
\begin{equation}\label{prueba1}
c\tr{E_{\beta}}+\tr{FE_{\beta}}\geq 0.
\end{equation}

Recall that $\tr{\beta}=-1$ (see Theorem \ref{strata}) and so
\begin{equation}\label{prueba3}
\begin{array}{ll}
\tr{E_{\beta}^2} & =\tr(\beta^2+||\beta||^4I+2||\beta||^2\beta)=||\beta||^2(1+n||\beta||^2-2) \\ \\
& = ||\beta||^2(-1+n||\beta||^2)=||\beta||^2\tr{E_{\beta}}.
\end{array}
\end{equation}

On the other hand, we have that
\begin{equation}\label{prueba4}
\tr{FE_{\beta}}=\tr{F|_{\ngo}(\beta+||\beta||^2)}=||\beta||^2\tr{F}
\end{equation}

\noindent by (\ref{betaort}).  We now use (\ref{c}), (\ref{prueba1}),
(\ref{prueba3}) and (\ref{prueba4}) and obtain by a straightforward manipulation
that
$$
\tr{F^2}\tr{E_{\beta}^2}\leq (\tr{FE_{\beta}})^2,
$$
that is, we get a `backwards' Cauchy-Schwartz inequality.  This has many strong
consequences. A first one is that $F=tE_{\beta}$ for some $t\geq 0$, and thus by
(\ref{c}),
\begin{equation}\label{tfor}
F|_{\ngo}=-\tfrac{c}{||\beta||^2}E_{\beta}.
\end{equation}

Recall that $\tr{F}>0$ since $F\ne 0$ by (\ref{prueba1}) and (\ref{betapos}).
Secondly, the three nonnegative terms in (\ref{ests}) must vanish, which
respectively implies that $[\ag,\ag]=0$ by (\ref{betapos}),
$[\beta,\ad{\ag}|_{\ngo}]=0$ by (\ref{adbeta}) and $\beta+||\beta||^2I\in\Der(\ngo)$
by (\ref{delta}).  Thus (ii) holds and so (iv) follows from (\ref{ricgen}).

We also obtain from (\ref{ricgen}) that the restrictions to $\ngo$ satisfy
\begin{equation}\label{cID}
cI+D = \unm\sum[\ad{A_i},(\ad{A_i})^t] + \Ricci_1 - S(\ad{H}),
\end{equation}

\noindent and hence it follows from (\ref{tfor}) that
$$
\unm\sum[\ad{A_i},(\ad{A_i})^t] +\Ricci_1=-\tfrac{c}{||\beta||^2}\beta.
$$
By taking traces we get $-\unc||\mu||^2=\tfrac{c}{||\beta||^2}$ (see Remark
\ref{st1}). This implies that
$$
\begin{array}{l}
\tfrac{1}{8}\sum||\pi((\ad{A_i})^t)\lb||^2 +\tr{\Ricci_1^2} =
\tr{\Ricci_1\sum[\ad{A_i},(\ad{A_i})^t]} +\tr{\Ricci_1^2} \\ \\
= -\tfrac{c}{||\beta||^2}\tr{\Ricci_1\beta} = \unc||\mu||^2\tr{\Ricci_1\beta} =
\tfrac{1}{16}||\mu||^4\la m(\mu),\beta\ra \leq \tfrac{1}{16}||\mu||^4 ||m(\mu)||
||\beta|| \\ \\
\leq \tfrac{1}{16}||\mu||^4 ||m(\mu)||^2 = \tr{\Ricci_1^2}.
\end{array}
$$
The first equality above holds by (\ref{trRadA}) and the last inequality follows
from (\ref{bmu}) and the fact that $m(\mu)=\tfrac{4}{||\mu||^2}\Ricci_1$ (see
(\ref{mmR}) and Remark \ref{st1}).  We therefore obtain that
$$
\sum||\pi((\ad{A_i})^t)\lb||^2=0,
$$
and so (iii) follows.  We now use (\ref{cID}) and Lemma \ref{adA} to conclude that
$$
\Ricci_1=cI+D+S(\ad{H})\in\RR I +\Der(\ngo),
$$
and therefore part (i) holds.
\end{proof}

It is well-known that a solvmanifold which satisfies conditions (ii) and (iii) in
Theorem \ref{main} is isometric to the one obtained by just changing the Lie bracket
into
\begin{equation}\label{sympart}
[A,X]'=S(\ad{A})X, \qquad [X,Y]'=[X,Y], \qquad\forall A\in\ag,\; X,Y\in\ngo,
\end{equation}
and keeping the same $\ip$ (see for instance \cite[Proposition 2.5]{Hbr}).

Thus the next result follows directly from Theorem \ref{main}.

\begin{corollary}\label{cormain}
Up to isometry, any solsoliton can be constructed as in {\rm Proposition
\ref{const}}.
\end{corollary}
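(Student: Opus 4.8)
The plan is to read off the corollary directly from Theorem \ref{main} together with the isometry reduction recorded just before the corollary's statement. First I would take an arbitrary solsoliton $S$, so that $\Ricci=cI+D$ with $D\in\Der(\sg)$. If $c\geq 0$ then $\Ricci=0$ by Proposition \ref{cneg}, so $S$ is flat and hence Euclidean, a case already produced by the construction of Proposition \ref{const} with $\ngo$ abelian; thus I may assume $c<0$. Theorem \ref{main} then hands me conditions (i)--(iv): $(\ngo,\ip|_{\ngo\times\ngo})$ is a nilsoliton with $\Ricci_1=cI+D_1$, the subalgebra $\ag$ is abelian, every $(\ad A)^t$ is a derivation of $\sg$, and $\la A,A\ra=-\tfrac{1}{c}\tr{S(\ad A)^2}$ for all $A\in\ag$.

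Because conditions (ii) and (iii) hold, the classical fact quoted immediately above the corollary lets me replace $S$ by the isometric solvmanifold $S'$ obtained by symmetrizing the brackets, namely $[A,X]'=S(\ad A)X$ and $[X,Y]'=[X,Y]$ (with $\ag$ still abelian), keeping the same inner product. This alters neither $\ngo$ nor $\ip|_{\ngo\times\ngo}$, so the nilradical of $S'$ is the very same nilsoliton of (i); the only change is that the adjoint operator $\ad'$ of $S'$ now satisfies $(\ad' A)|_{\ngo}=S(\ad A)|_{\ngo}$, which is symmetric.

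It then remains to recognize $S'$ as an instance of Proposition \ref{const}. The operators $S(\ad A)|_{\ngo}$ are symmetric by construction and are derivations of $\ngo$, since $\ad A$ is always a derivation and $(\ad A)^t$ is one by (iii), whence so is the average $S(\ad A)=\unm(\ad A+(\ad A)^t)$. They also commute: as $\ag$ is abelian in $S'$ and $\ad'$ is a Lie algebra homomorphism, $[\ad' A,\ad' B]=\ad'([A,B]')=0$, so restricting to $\ngo$ gives $[S(\ad A),S(\ad B)]=0$. Hence $\{\,S(\ad A):A\in\ag\,\}$ is an abelian Lie algebra of symmetric derivations of $(\ngo,\ip|_{\ngo\times\ngo})$, exactly the data Proposition \ref{const} takes as input. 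Finally, since $(\ad' A)|_{\ngo}=S(\ad A)$, condition (iv) reads $\la A,A\ra=-\tfrac{1}{c}\tr(\ad' A|_{\ngo})^2$, which is precisely the normalization of $\ip$ on $\ag$ prescribed in the construction. Therefore $S'$, and so $S$, is isometric to a solsoliton built as in Proposition \ref{const}. The argument is thus pure bookkeeping, matching each clause of Theorem \ref{main} to a hypothesis of Proposition \ref{const}; the only step carrying real content is the isometry reduction to $S'$, but this is the cited classical fact about symmetrizing $\ad A\mapsto S(\ad A)$, so no genuine obstacle arises.
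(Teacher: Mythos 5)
Your proof is correct and follows essentially the same route as the paper: apply Theorem \ref{main} to get conditions (i)--(iv), then invoke the symmetrization isometry (\ref{sympart}) (the fact cited from \cite[Proposition 2.5]{Hbr}) to identify the symmetrized solvmanifold with the construction of Proposition \ref{const}. Your explicit dispatch of the case $c\geq 0$ via Proposition \ref{cneg}, and your verification that the operators $S(\ad{A})|_{\ngo}$ form an abelian algebra of symmetric derivations, are just slightly more detailed bookkeeping than the paper's one-line deduction, not a different argument.
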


As a byproduct of the proof of Theorem \ref{main}, the following extra structural
properties for solsolitons have been obtained.

\begin{proposition}\label{extras}
Let $S$ be a solsoliton, say with $\Ricci=cI+D$, $c<0$, $D\in\Der(\sg)$, and let us
assume that $\ngo$ is not abelian, $\mu:=\lb|_{\ngo\times\ngo}\in\sca_{\beta}$ and
$\beta_{\mu}=\beta$.  If $E_{\beta}\in\End(\sg)$ is defined by
$$
E_{\beta}:=\left[\begin{smallmatrix} 0&0\\
0&\beta+||\beta||^2I\end{smallmatrix}\right], \qquad\mbox{i.e.}\quad
E_{\beta}|_{\ag}=0, \quad E_{\beta}|_{\ngo}=\beta+||\beta||^2I,
$$
then the following conditions hold:

\begin{itemize}
\item[(i)] $E_{\beta}\in\Der(\sg)$ (or equivalently, $[\beta,\ad{\ag}]=0$
and $\beta+||\beta||^2I\in\Der(\ngo)$).

\item[(ii)] $S(\ad{H})+D=-\tfrac{c}{||\mu||^2}E_{\beta}$.  In particular,
$S$ is Einstein if and only if $S(\ad{H})=-\tfrac{c}{||\mu||^2}E_{\beta}$.

\item[(iii)] $c=-\unc ||\mu||^2||\beta||^2$ and $m(\mu)=\beta$.
\end{itemize}
\end{proposition}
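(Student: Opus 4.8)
The plan is to extract all three statements from the non-abelian branch of the proof of Theorem \ref{main}: under the standing hypotheses here ($\ngo$ non-abelian, $\mu=\lb|_{\ngo\times\ngo}\in\sca_\beta$, and $\beta_\mu=\beta$), that argument never used anything beyond $\Ricci=cI+D$ with $c<0$, and it produced exactly (i)--(iii) along the way; so I would simply reorganize and name the relevant consequences. Because $\beta_\mu=\beta$ is assumed, properties (\ref{adbeta})--(\ref{delta}) of Theorem \ref{strata} are available for $\mu$ directly, with no need for the isometric $\Or(n)$-reduction used in the theorem. Writing $F:=S(\ad H)+D$, I would begin as there: apply (\ref{einstein}) to $E=E_\beta$, use $\tr{BE_\beta}=0$ and Lemma \ref{est}, and combine with (\ref{c}), (\ref{prueba3}), (\ref{prueba4}) to obtain the backwards Cauchy--Schwarz inequality $\tr{F^2}\,\tr{E_\beta^2}\leq(\tr{FE_\beta})^2$.

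From this inequality I would read off (ii) first. As $F$ and $E_\beta$ are symmetric and the genuine Cauchy--Schwarz inequality for the trace form $\tr{XY}$ runs the other way, equality forces the full-operator proportionality $F=tE_\beta$; that $F\neq 0$ (hence $t>0$) follows since (\ref{prueba1}) together with $\tr{E_\beta}>0$ (from $\beta+||\beta||^2I>0$, i.e. (\ref{betapos})) gives $\tr{FE_\beta}\geq -c\,\tr{E_\beta}>0$. Feeding $F=tE_\beta$ into (\ref{c}) and using (\ref{prueba3}) pins the constant down (this is (\ref{tfor})), which is precisely assertion (ii); in particular $E_\beta|_\ag=0$ records that $F$ annihilates $\ag$. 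The Einstein clause is then immediate: since $\ngo$ is non-abelian, $\Ricci=cI$ forces $D=0$, so $S$ is Einstein exactly when $S(\ad H)$ alone equals the right-hand side.

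For (i), the same equality makes the left-hand side of (\ref{prueba1}) vanish, so the three nonnegative terms of (\ref{ests}) are each zero. Their equality characterizations, via (\ref{betapos}), (\ref{adbeta}) and (\ref{delta}) respectively, give $[\ag,\ag]=0$, $[\beta,\ad\ag]=0$ and $\beta+||\beta||^2I\in\Der(\ngo)$; the latter two are the conditions equivalent to $E_\beta\in\Der(\sg)$ asserted in (i), and the first supplies the derivation rule on $\ag\times\ag$. For the first half of (iii), once (i) and part (iii) of Theorem \ref{main} yield $\sum[\ad{A_i},(\ad{A_i})^t]=0$, equation (\ref{cID}) combined with (\ref{tfor}) reduces to $\Ricci_1=-\tfrac{c}{||\beta||^2}\beta$; taking traces and using $\tr{\Ricci_1}=-\unc||\mu||^2$ (Remark \ref{st1}) and $\tr\beta=-1$ gives $c=-\unc||\mu||^2||\beta||^2$.

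The remaining identity $m(\mu)=\beta$, and the main obstacle, is the equality analysis of the long displayed chain of inequalities in the theorem's proof. The chain has the shape $\tfrac18\sum||\pi((\ad{A_i})^t)\lb||^2+\tr{\Ricci_1^2}\leq\cdots\leq\tr{\Ricci_1^2}$, so it is really a single squeeze: every inequality must be an equality and the leading sum must vanish (the latter being condition (iii) of Theorem \ref{main}). The delicate point is to combine the two tight inequalities correctly --- Cauchy--Schwarz $\la m(\mu),\beta\ra\leq||m(\mu)||\,||\beta||$ forces $m(\mu)$ to be a nonnegative multiple of $\beta$, while the equality clause of the moment-map bound (\ref{bmu}) forces $m(\mu)$ to be conjugate to $\beta$ and hence of equal norm --- and to conclude from these that $m(\mu)=\beta$ outright, using $m(\mu)=\tfrac{4}{||\mu||^2}\Ricci_1$ (by (\ref{mmR}) and Remark \ref{st1}) to fix the proportionality factor.
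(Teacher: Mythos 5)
Your proposal is correct and takes essentially the same route as the paper: the paper offers no separate argument for this proposition, presenting it as a byproduct of the proof of Theorem \ref{main}, and your write-up is precisely an organized extraction of that proof (the backwards Cauchy--Schwarz inequality forcing $F=tE_{\beta}$, the vanishing of the three nonnegative terms in (\ref{ests}), the trace identity giving $c=-\unc||\mu||^2||\beta||^2$, and the equality analysis of the final chain of inequalities yielding $m(\mu)=\beta$). One point worth recording: what that proof actually yields is (\ref{tfor}), namely $S(\ad{H})+D=-\tfrac{c}{||\beta||^2}E_{\beta}=\unc||\mu||^2E_{\beta}$, and your identification of this with assertion (ii) is the intended reading, since the denominator $||\mu||^2$ printed in (ii) is inconsistent with (\ref{tfor}) and with part (iii) (for instance, for the Einstein solvmanifold over the canonical Heisenberg nilsoliton one has $S(\ad{H})+D=\unm E_{\beta}=-\tfrac{c}{||\beta||^2}E_{\beta}\neq-\tfrac{c}{||\mu||^2}E_{\beta}$), so it should be read as $||\beta||^2$ there.
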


Recall that condition $\beta_\mu=\beta$ can be assumed to hold up to isometry since
always $\beta_{g.\mu}=\beta$ for some $g\in\Or(\ngo)$ (see Theorem \ref{strata} and
the paragraph before Lemma \ref{est}).

\begin{remark}\label{sc}
Can we use condition (\ref{rsD}) to find examples of Ricci solitons on solvable Lie
groups which are not simply connected?  The answer is no, such as for Einstein
solvmanifolds.  Indeed, for the field $X_D$ (see (\ref{defxd})) to be defined on a
Lie group $S/\Gamma$ covered by a simply connected solvable Lie group $S$, where
$\Gamma$ is a discrete subgroup of the center of $S$, it is necessary that the
one-parameter group of $\vp_t\in\Aut(S)$ with $\dif\vp_t|_e=e^{tD}$ satisfies
$\vp_t(\Gamma)=\Gamma$ for all $t\in\RR$.  But since $\Gamma$ is discrete this
implies that $\vp_t(\gamma)=\gamma$ for all $\gamma\in\Gamma$ and $t$, and
consequently $DX=0$ for some nonzero $X$ in the center $\zg(\sg)$ of $\sg$.  It
follows from Theorem \ref{main} (ii) and (iii) that $X\in\ngo$, and since
$D|_{\ngo}=D_1-S(\ad{H})|_{\ngo}$ we obtain that
$$
0=\la DX,X\ra=\la D_1X,X\ra-\la[H,X],X\ra = \la D_1X,X\ra,
$$
which contradicts the fact that $D_1$ is positive definite.
\end{remark}

\section{Uniqueness of solsolitons}\label{uniq}

The structural results obtained in Theorem \ref{main} pave the way for the following
uniqueness result for solsolitons, which is the analogous of the one already known
for nilsolitons.

\begin{theorem}\label{uni}
Let $S$ and $S'$ be two solsolitons which are isomorphic as Lie groups.  Then $S$ is
isometric to $S'$ up to scaling.
\end{theorem}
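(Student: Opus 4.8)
The plan is to use the structure theory already in hand to strip the problem down to a conjugacy statement for abelian algebras of symmetric derivations of a fixed nilsoliton, and then to settle that statement with the self-adjointness of the automorphism group of a nilsoliton together with standard conjugacy theorems for real algebraic groups. First I normalize both sides. By Corollary~\ref{cormain} I may assume $S$ and $S'$ are built as in Proposition~\ref{const}, so $\sg=\ag\oplus\ngo$ with $\ngo$ the nilradical, $\ag$ an abelian algebra of symmetric derivations $\rho(A):=\ad{A}|_{\ngo}$ of the nilsoliton $(\ngo,\ip_1)$, and $\Ricci=cI+D$; similarly $\sg'=\ag'\oplus\ngo'$. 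Since the nilradical is characteristic, any Lie algebra isomorphism $\Psi\colon\sg\to\sg'$ restricts to an isomorphism $\ngo\to\ngo'$, and by Theorem~\ref{main}(i) the restricted metrics are nilsolitons whose constant is exactly the solsoliton constant $c$. Scaling $S'$ so that $c'=c$ thus pins the scale of the nilradical nilsoliton, and nilsoliton uniqueness (property (b)) produces an isometric isomorphism $\phi\colon(\ngo,\ip_1)\to(\ngo',\ip_1')$. Transporting $S'$ by $\phi$, I may assume from now on that both solsolitons share the common nilradical $(\ngo,\ip_1)$ and the common constant $c$, and differ only in the abelian algebras $\aca=\rho(\ag)$ and $\aca'=\rho'(\ag')$ of symmetric derivations of $(\ngo,\ip_1)$.

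With this reduction the theorem reduces to finding an orthogonal automorphism $\theta\in\Aut(\ngo)\cap\Or(\ngo,\ip_1)$ with $\theta\aca\theta^{-1}=\aca'$. Indeed, such a $\theta$ extends to a map $\Theta\colon\sg\to\sg'$ by $\Theta|_{\ngo}=\theta$ and $\Theta|_{\ag}$ the linear isomorphism onto $\ag'$ determined by $\rho'(\Theta A)=\theta\rho(A)\theta^{-1}$; from $[A,X]=\rho(A)X$ one checks at once that $\Theta$ is a Lie algebra isomorphism preserving the orthogonal splittings, and it is an isometry on $\ag$ because $\la A,A\ra=-\tfrac1c\tr{\rho(A)^2}$ is conjugation invariant and $c$ is common. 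Hence $\Theta$ is an isometric isomorphism and $S$ is isometric to $S'$.

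To produce $\theta$ I would exploit $\Psi$ in two stages. Writing $\psi:=\Psi|_{\ngo}\in\Aut(\ngo)$ and $\Psi(A)=\eta(A)+\xi(A)$ with $\eta(A)\in\ag'$, $\xi(A)\in\ngo$, the homomorphism property of $\Psi$ gives $\psi\rho(A)\psi^{-1}=\rho'(\eta(A))+\ad_{\ngo}\xi(A)$ on $\ngo$; hence $\psi\aca\psi^{-1}\subseteq\mathfrak{q}:=\aca'\oplus\Inn(\ngo)$, a solvable subalgebra of $\Der(\ngo)$ with nilradical $\Inn(\ngo)$. Both $\psi\aca\psi^{-1}$ and $\aca'$ are abelian, consist of $\RR$-diagonalizable operators, and have dimension $\dim\ag=\dim\ag'$ (the $\rho$'s being faithful by condition (iv)), so each is a maximal $\RR$-split torus of $\mathfrak{q}$. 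Conjugacy of maximal split tori in a solvable group then yields an inner automorphism $u\in\exp(\Inn(\ngo))\subset\Aut(\ngo)$ with $(u\psi)\,\aca\,(u\psi)^{-1}=\aca'$.

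The remaining step, which I expect to be the crux, is to upgrade this $\Aut(\ngo)$-conjugacy to an orthogonal one. Here I use that a nilsoliton metric makes $\Aut(\ngo)$ self-adjoint, so $\Der(\ngo)=\fg\oplus\pg$ is a Cartan decomposition with $\pg$ exactly the symmetric derivations; thus $\aca,\aca'\subset\pg$ are abelian subalgebras that are conjugate under $\Aut(\ngo)$. The key input is the standard fact that such a conjugacy is already realized by the maximal compact subgroup $K=\Aut(\ngo)\cap\Or(\ngo,\ip_1)$: concretely, enlarge $\aca,\aca'$ to maximal abelian subalgebras (Cartan subspaces) of $\pg$, which are $K$-conjugate, and control the residual freedom inside a common Cartan subspace by the little Weyl group, which is realized in $K$. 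This produces the desired orthogonal $\theta$, completing the proof; the case $\ngo$ abelian, where the nilsoliton is flat and $\pg=\sym(\ngo)$, is handled the same way and is easier.
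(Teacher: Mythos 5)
Your first two stages are fine and run parallel to the paper's own reduction: after normalizing $c=c'$ and transporting by an isometric isomorphism of the nilradicals (nilsoliton uniqueness), the theorem does reduce to showing that two abelian subalgebras $\aca,\aca'\subset\Der(\ngo)\cap\sym(\ngo,\ip_1)$ which are conjugate under $\Aut(\ngo)$ are already conjugate under $\Aut(\ngo)\cap\Or(\ngo,\ip_1)$; and the absorption of the inner-derivation terms (your torus-conjugacy step in $\aca'\oplus\Inn(\ngo)$) can be made to work, although the paper does it more elementarily, showing the inner part is actually zero because symmetric derivations preserve the orthogonal pieces of the descending central series while inner derivations strictly lower them.

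The gap is exactly at what you yourself call the crux. The key input you invoke --- that a nilsoliton metric makes $\Aut(\ngo)$ self-adjoint, so that $\Der(\ngo)$ splits as a Cartan decomposition with $\pg$ the symmetric derivations --- is false. Already for the Heisenberg algebra $\hg_3$ with its nilsoliton metric (orthonormal basis $e_1,e_2,e_3$, $[e_1,e_2]=e_3$), the map $E$ defined by $Ee_1=e_3$, $Ee_2=Ee_3=0$ is a derivation whose transpose ($e_3\mapsto e_1$) is not. Structurally, a self-adjoint linear Lie algebra is necessarily reductive, while $\Der(\ngo)$ typically has a noncentral abelian ideal (for $\hg_3$ it is $\glg_2\ltimes\RR^2$), so \emph{no} inner product makes $\Aut(\ngo)$ self-adjoint; hence there are no Cartan subspaces and no little Weyl group for $\Aut(\ngo)$, and your final step collapses. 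Note that the statement you need at this point is precisely the nontrivial direction of Proposition \ref{toclas}, which the paper obtains as a consequence of Theorem \ref{uni}: this is where the real content of the theorem sits, and it is not available as standard reductive group theory. A repair is possible but needs genuinely more input from the GIT machinery: symmetric derivations commute with $\Ricci_1$, hence with $D_1$, so $\aca,\aca'\subset\ggo_1:=\{E\in\Der(\ngo):[D_1,E]=0\}$; the identity $||\pi(E^t)\mu||^2=||\pi(E)\mu||^2+||\mu||^2\la m(\mu),[E,E^t]\ra$ together with (\ref{adbeta}) shows that $\ggo_1$, unlike $\Der(\ngo)$, is self-adjoint, and one must further push the conjugating automorphism into the corresponding centralizer subgroup (using the nonnegative grading of $\Der(\ngo)$ by $\ad{D_1}$) before a polar-decomposition argument applies. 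None of this is in your outline. The paper's proof sidesteps all of it: it takes a triangular isomorphism $f=\left[\begin{smallmatrix} g&0\\ j&h\end{smallmatrix}\right]$, gets $h$ orthogonal from nilsoliton uniqueness, kills $j$ by the filtration argument, and then the orthogonality of $g$ is automatic from condition (iv) of Theorem \ref{main}, since $\la A,A\ra=-\tfrac{1}{c}\tr{S(\ad{A}|_{\ngo})^2}$ is preserved under conjugation by the orthogonal $h$.
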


\begin{remark}
In particular, a given solvable Lie group can admit at most one Ricci soliton left
invariant metric up to isometry and scaling.
\end{remark}

\begin{proof}
Without any loss of generality, we can assume that $\sg=\sg'$, $\ag=\ag'$ and
$\ngo=\ngo'$ as vector spaces and that $\ip=\ip'$.  Thus the solvmanifolds $S$ and
$S'$ are respectively determined only by their Lie brackets $\lb$ and $\lb'$, which
must satisfy all structural properties in Theorem \ref{main}. From now on, such properties will
be referred to just by (i),...,(iv).  Since $S$ and $S'$ are isomorphic,
there exists a Lie algebra isomorphism between $\lb$ and $\lb'$ of the form
$$
f=\left[\begin{array}{cc} g&0\\
j&h\end{array}\right], \qquad g\in\Gl(\ag), \quad h\in\Gl(\ngo), \quad
j:\ag\longrightarrow\ngo,
$$
such that
\begin{equation}\label{ison}
h.\mu=\mu', \qquad \mu:=\lb|_{\ngo\times\ngo}, \quad \mu':=\lb'|_{\ngo\times\ngo},
\end{equation}

\noindent and
\begin{equation}\label{isoa}
h\ad(g^{-1}A)|_{\ngo}h^{-1} = \ad'{A}|_{\ngo} + \ad_{\mu'}(jg^{-1}A), \qquad\forall
A\in\ag.
\end{equation}

These two conditions are also sufficient for $f$ being an isomorphism by (ii).  We
can assume up to scaling that the scalars $c$ and $c'$ in the definition of a
solsoliton satisfy $c=c'$, and therefore $||\mu||=||\mu'||$ by Proposition
\ref{extras}, (iii) and the fact that $\mu$ and $\mu'$ belong to the same
$\Gl(\ngo)$-orbit and consequently they must lie in the same stratum $\sca_{\beta}$
(see Theorem \ref{strata}). It then follows from (i) and the uniqueness for
nilsolitons (see either \cite[Theorem 3.5]{soliton} or \cite[Theorem
4.2]{cruzchica}) that $h\in\Or(\ngo)$.

If $\ngo=\ngo_1\oplus...\oplus\ngo_r$ is the orthogonal decomposition with
$[\ngo,\ngo]=\ngo_2\oplus...\oplus\ngo_r$,
$[\ngo,[\ngo,\ngo]]=\ngo_3\oplus...\oplus\ngo_r$, and so on, then by (iii) we have
that for all $i$, $\ngo_i$ is an invariant subspace for any $\ad{A}|_{\ngo},
\ad'{A}|_{\ngo}$, $A\in\ag$, and also for $h$.  But since
$\ad_{\mu'}(jg^{-1}A)\ngo_i\subset\ngo_{i+1}\oplus...\oplus\ngo_r$ for all $i$,
condition (\ref{isoa}) implies that $\ad_{\mu'}(jg^{-1}A)=0$ for all $A\in\ag$ and
consequently
$$
\tilde{f}=\left[\begin{array}{cc} g&0\\
0&h\end{array}\right]
$$
is also an isomorphism between $\lb$ and $\lb'$.  We finally use (\ref{isoa}) and
(iv) to obtain that
$$
-c\la g^{-1}A,g^{-1}A\ra = \tr{S(\ad(g^{-1}A)|_{\ngo})^2} =
\tr{S(\ad'{A}|_{\ngo})^2} = -c\la A,A\ra,
$$
that is, $g\in\Or(\ag)$.   Thus $\tilde{f}$ is an orthogonal isomorphism and so it
determines an isometry between $S$ and $S'$, which concludes the proof of the
theorem.
\end{proof}

It follows from Corollary \ref{cormain} that to classify solsolitons up to isometry
one firstly has to classify nilsolitons and for each of these to consider all
possible abelian Lie algebras of symmetric derivations.  The following result gives
us the precise equivalence relation that must be considered between such algebras.

\begin{proposition}\label{toclas}
Let $(\ngo,\ip_1)$ be a nilsoliton and consider two solsolitons $S$ and $S'$
constructed as in {\rm Proposition \ref{const}} for abelian subalgebras
$$
\ag,\ag'\subset\Der(\ngo)\cap\sym(\ngo,\ip_1),
$$
respectively.  Then $S$ is isometric to $S'$ if and only if there exists
$h\in\Aut(\ngo)\cap\Or(\ngo,\ip_1)$ such that $\ag'=h\ag h^{-1}$.
\end{proposition}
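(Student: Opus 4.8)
The plan is to prove both implications by exhibiting an isometric Lie algebra isomorphism $f:(\sg,\ip)\to(\sg',\ip')$ and then passing to the simply connected groups. Since both solsolitons come from the same nilsoliton $(\ngo,\ip_1)$, the constant $c<0$ is common to both, and by Proposition \ref{const} the inner products on $\ag$ and $\ag'$ are both given by the trace form $\la B,C\ra=-\tfrac1c\tr{BC}$.

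For the sufficiency, assume $h\in\Aut(\ngo)\cap\Or(\ngo,\ip_1)$ with $\ag'=h\ag h^{-1}$, and define $f:\sg\to\sg'$ by $f(A+X)=hAh^{-1}+hX$ for $A\in\ag$, $X\in\ngo$. I would verify that $f$ is a Lie algebra isomorphism by checking the three bracket types: on $\ngo\times\ngo$ it amounts to $h\in\Aut(\ngo)$; on $\ag\times\ngo$, using that $\ad{A}|_{\ngo}=A$ and $\ad{(hAh^{-1})}|_{\ngo}=hAh^{-1}$, one gets $f([A,X])=h(AX)=(hAh^{-1})(hX)=[f(A),f(X)]'$; and on $\ag\times\ag$ both sides vanish. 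Orthogonality of $f$ is clear off the diagonal, since $\ag\perp\ngo$ in both algebras and $f$ respects the splitting, holds on $\ngo$ because $h\in\Or(\ngo,\ip_1)$, and on $\ag$ follows from the conjugation invariance of the trace, $\la hAh^{-1},hCh^{-1}\ra=-\tfrac1c\tr{hACh^{-1}}=-\tfrac1c\tr{AC}=\la A,C\ra$. Integrating $f$ to an isomorphism of the simply connected groups then yields an isometry between $S$ and $S'$.

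For the necessity, the crux is to replace an arbitrary isometry from $S$ to $S'$ by an isometric Lie algebra isomorphism $f:\sg\to\sg'$. Both solvmanifolds are in standard position, with $\ad{A}$ symmetric for every $A$ in the abelian part; this is precisely the normal form (\ref{sympart}), already attained by the construction of Proposition \ref{const}, so there is no nilshadow discrepancy to correct. Appealing to the structure theory of the isometry group of a solvmanifold \cite{GrdWls}, two solvmanifolds in standard position are isometric only when there is an isometric isomorphism between their metric Lie algebras. I expect this reduction to be the main obstacle, since it rests on the description of the full isometry group rather than on the moment-map techniques of the previous sections.

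Granting such an $f$, the conclusion follows quickly. Being an isomorphism, $f$ carries the nilradical $\ngo$ of $\sg$ onto the nilradical $\ngo$ of $\sg'$, so $h:=f|_{\ngo}\in\Aut(\ngo)$, and $h\in\Or(\ngo,\ip_1)$ because $f$ is orthogonal and $\ip|_{\ngo\times\ngo}=\ip_1$. Orthogonality together with $f(\ngo)=\ngo$ forces $f(\ag)=f(\ngo^\perp)=\ngo^\perp=\ag'$. Finally, for $A\in\ag$ and $X\in\ngo$ the homomorphism property gives $h(AX)=f([A,X])=[f(A),hX]'=f(A)(hX)$, so that $f(A)=hAh^{-1}$; therefore $\ag'=f(\ag)=h\ag h^{-1}$, as desired.
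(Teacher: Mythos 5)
Your proof is correct and takes essentially the same route as the paper: your sufficiency direction is exactly the paper's construction of the block-diagonal isometric isomorphism $f=\left[\begin{smallmatrix} g&0\\0&h\end{smallmatrix}\right]$ with $gA=hAh^{-1}$ and the trace-form computation, while your necessity direction---reducing the isometry to an isometric Lie algebra isomorphism (legitimate here, since symmetric $\ad{A}|_{\ngo}$ makes these solsolitons completely solvable, so \cite{Alk} or the Gordon--Wilson theory \cite{GrdWls} you invoke applies) and then reading off $h\in\Aut(\ngo)\cap\Or(\ngo,\ip_1)$ and $\ag'=h\ag h^{-1}$---is precisely what the paper compresses into ``argue as in the proof of Theorem \ref{uni}.'' The only difference is presentational: you make explicit the isometry-to-isomorphism reduction and derive $f(A)=hAh^{-1}$ directly, whereas the paper routes this through the triangular form and conditions (\ref{ison}), (\ref{isoa}) of Theorem \ref{uni}.
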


\begin{proof}
If $S$ and $S'$ are isometric then we can argue as in the proof of Theorem \ref{uni}
to obtain that $h\in\Aut(\ngo)\cap\Or(\ngo,\ip_1)$ (see (\ref{ison})) and that
$\ag'=h\ag h^{-1}$ by (\ref{isoa}).

Conversely, if we define $g:\ag\longrightarrow\ag'$ by $gA=hAh^{-1}$, then
$$
\la gA,gA\ra' = -\tfrac{1}{c}\tr{(gA)^2} = -\tfrac{1}{c}\tr{(hAh^{-1})^2} =
-\tfrac{1}{c}\tr{A^2} = \la A,A\ra.
$$
This implies that $f:=\left[\begin{array}{cc} g&0\\0&h\end{array}\right]$ is an
isometric isomorphism between $(\sg,\ip)$ and $(\sg',\ip')$ and thus $f$ defines an
isometry between $S$ and $S'$, as was to be shown.
\end{proof}

\section{Examples of solsolitons}\label{exa}

Once we have chosen our favorite nilsoliton, it is quite easy to get examples of
solsolitons by using Proposition \ref{const}.  The results in Section
\ref{structure} even tell us that any solsoliton can be constructed in this simple
way.  We consider in this section the problem of which simply connected solvable Lie
groups of dimension $\leq 4$ admit a solsoliton.

\subsection{Dimension $3$}

It is well known that for any $3$-dimensional real solvable Lie algebra which is not
nilpotent there exists a basis $\{ A,X_1,X_2\}$ such that
\begin{equation}\label{par1}
\begin{array}{l}
\rg_{\alpha}: \quad [A,X_1]=X_1, \quad [A,X_2]=\alpha X_2, \qquad -1\leq\alpha\leq 1, \\ \\
\sg_{\beta}: \quad [A,X_1]=X_2, \quad [A,X_2]= -X_1+\beta X_2, \qquad 0\leq\beta\leq
2.
\end{array}
\end{equation}
The constraints on the parameters $\alpha$ and $\beta$ guarantee that these Lie
algebras are in addition pairwise non-isomorphic (see \cite{Jcb}).  It follows from
Proposition \ref{const} that the inner product $\ip$ for which $\{ A,X_1,X_2\}$ is
orthonormal is a solsoliton on $\rg_{\alpha}$ for all $\alpha$.  Recall that in all
cases we have $\sg=\ag\oplus\ngo$ for $\ag=\RR A$ and $\ngo=\RR X_1+\RR X_2$, $\ngo$
abelian. For $\sg_{\beta}$, the matrix of $\ad{A}|_{\ngo}$ relative to $\{
X_1,X_2\}$ equals $\left[\begin{smallmatrix} 0&-1\\
1&\beta\end{smallmatrix}\right]$, which has eigenvalues $\tfrac{\beta}{2}\pm
i\sqrt{1-\tfrac{\beta^2}{4}}$, and thus for any $0\leq\beta<2$, there is a basis $\{
Y_1,Y_2\}$ of $\ngo$ with respect to which $\ad{A}|_{\ngo}$ has the normal matrix
$$
\left[\begin{smallmatrix} \tfrac{\beta}{2}&-\sqrt{1-\tfrac{\beta^2}{4}}\\
\sqrt{1-\tfrac{\beta^2}{4}}&\tfrac{\beta}{2}\end{smallmatrix}\right].
$$
This implies that the inner product $\ip_1$ given by $\la
A,A\ra_1=\tr{S(\ad{A})^2}=\tfrac{\beta^2}{2}$ and with $\{ Y_1,Y_2\}$ as an
orthonormal basis for $\ngo$ is a solsoliton for $0<\beta<2$ (see Theorem
\ref{main}). But hence no new example appears, since by (\ref{sympart}) the
solsolitons $(\sg_{\beta},\ip_1)$ are either isometric to the hyperbolic space
$H^3=(\rg_1,\ip)$ for $0<\beta<2$ or to the euclidean space $\RR^3$ for $\beta=0$.

The only remaining case is $\sg_2$, for which we have that $\ad{A}|_{\ngo}$ is not
diagonalizable over $\CC$ and so it can not be normal with respect to any inner
product on $\ngo$.  We conclude from Theorem \ref{main} that $\sg_2$ is the only
$3$-dimensional solvable Lie group which does not admit a solsoliton.

{\small
\begin{table}
$$
\begin{array}{ccccccc}
\hline\hline
&&&&&& \\
 & \ngo & \ad{A}|_{\ngo} & constraints  & unimodular & solsoliton & Einstein  \\ \\
\hline
&&&&&&\\
\rg_3 & \RR^2 & \left[\begin{smallmatrix} 1&1\\ &1\end{smallmatrix}\right] & - & - & - & - \\ \\

\rg_{3,\lambda} & \RR^2 & \left[\begin{smallmatrix} 1&\\ &\lambda\end{smallmatrix}\right] & -1\leq\lambda\leq 1 & \lambda=-1 & \checkmark & \lambda=1 \\ \\

\rg_{3,\lambda}' & \RR^2 & \left[\begin{smallmatrix} \lambda&1\\ -1&\lambda\end{smallmatrix}\right] & 0\leq\lambda & \lambda=0 & \checkmark & \checkmark\\ \\
\hline \hline \\
\end{array}
$$
\caption{Classification of $3$-dimensional solvable Lie algebras admitting
solsolitons}\label{s3}
\end{table}}

There are many ways to parametrize $3$-dimensional solvable Lie algebras over $\RR$
other than (\ref{par1}).  It is in fact more convenient for our purpose to use the
description given in \cite[Theorem 1.1]{AndBrbDttOvn}, which we show in Table
\ref{s3} together with the information about the existence of solsolitons.  In this
case, if $\{ X_1,X_2\}$ is the basis of $\ngo=\RR^2$ that we use to write
$\ad{A}|_{\ngo}$, then the inner product $\ip$ making of $\{ A,X_1,X_2\}$ an
orthonormal basis is always a solsoliton.

We note that for any $0\leq\lambda$, $\rg_{3,\lambda}'$ is isomorphic to
$\sg_{\beta}$ for $\beta=\tfrac{2\lambda}{\sqrt{\lambda^2+1}}$ and $\rg_3$ is
isomorphic to $\sg_2$.  A third description can be found in \cite{Mln}.

\subsection{Dimension $4$}

It is enough to consider $4$-dimensional real solvable Lie algebras which are not
either nilpotent or a direct sum of two Lie algebras.  If $\sg=\ag\oplus\ngo$ with
$\ngo$ the nilradical of $\sg$, then the only one with $\dim{\ag}=2$ is
$\affg(\CC)$, which is defined for a basis $\{ A_1,A_2\}$ of $\ag$ by
$$
\ad{A_1}|_{\ngo}=\left[\begin{smallmatrix} 1&0\\ 0&1\end{smallmatrix}\right], \quad
\ad{A_2}|_{\ngo}=\left[\begin{smallmatrix} 0&-1\\ 1&0\end{smallmatrix}\right].
$$
It follows from Theorem \ref{main} that $\affg(\CC)$ admits a solsoliton, which is
isometric to $H^3\times\RR$ by (\ref{sympart}).

Any other has $\dim{\ag}=1$, say $\ag=\RR A$, and let $\{ X_1,X_2,X_3\}$ be a basis
of $\ngo$, which will be assumed to satisfy $[X_1,X_2]=X_3$ when $\ngo$ is not
abelian (i.e. $\ngo$ isomorphic to the $3$-dimensional Heisenberg Lie algebra
$\hg_3$).  We have used the classification given in \cite[Theorem 1.5]{AndBrbDttOvn}
to summarize all the relevant information in Table \ref{n3}, which has been obtained
as in the $3$-dimensional case above by a direct application of Proposition
\ref{const} and Theorem \ref{main}.  The map $\ad{A}|_{\ngo}$ is always written in
terms of the basis $\{ X_1,X_2,X_3\}$ and the inner product $\ip$ for which $\{
A,X_1,X_2,X_3\}$ is an orthonormal basis is always a solsoliton, when there is one.

{\small
\begin{table}
$$
\begin{array}{ccccccc}
\hline\hline
&&&&&& \\
 & \ngo & \ad{A}|_{\ngo} & constraints  & unimodular & solsoliton & Einstein  \\ \\
\hline
&&&&&&\\
\rg_4 & \RR^3 & \left[\begin{smallmatrix} 1&1&\\ &1&1\\ &&1\end{smallmatrix}\right] & - & - & - & - \\ \\

\rg_{4,\lambda} & \RR^3 & \left[\begin{smallmatrix} 1&&\\ &\lambda&1\\ &&\lambda\end{smallmatrix}\right] & -\infty<\lambda<\infty & \lambda=-\unm & - & - \\ \\

\rg_{4,\mu,\lambda} & \RR^3 & \left[\begin{smallmatrix} 1&&\\ &\mu&\\ &&\lambda\end{smallmatrix}\right] & \begin{array}{l} -1<\mu\leq\lambda\leq 1; \\ -1=\mu\leq\lambda<0\end{array} & \mu=-1-\lambda & \checkmark & \mu=\lambda=1\\ \\

\rg_{4,\mu,\lambda}' & \RR^3 & \left[\begin{smallmatrix} \mu&&\\ &\lambda&1\\ &-1&\lambda\end{smallmatrix}\right] & 0<\mu & \mu=-2\lambda & \checkmark & \mu=\lambda \\ \\

\sg_4 & \hg_3 & \left[\begin{smallmatrix} 1&&\\ &-1&\\ &&0\end{smallmatrix}\right] & - & \checkmark & \checkmark & - \\ \\

\sg_{4,\lambda} & \hg_3 & \left[\begin{smallmatrix} \lambda&&\\ &1-\lambda&\\ &&1\end{smallmatrix}\right] & \unm\leq\lambda & - & \checkmark & \lambda=\unm \\ \\

\sg_{4,\lambda}' & \hg_3 & \left[\begin{smallmatrix} \lambda&1&\\ -1&\lambda&\\ &&2\lambda\end{smallmatrix}\right] & 0\leq\lambda & \lambda=0 & \lambda\ne 0 & \lambda\ne 0 \\ \\

\hg_4 & \hg_3 & \left[\begin{smallmatrix} 1&1&\\ &1&\\ &&2\end{smallmatrix}\right] & - & - & - & - \\ \\
\hline \hline \\
\end{array}
$$
\caption{Classification of $4$-dimensional solvable Lie algebras with a
$3$-dimensional nilradical admitting a solsoliton}\label{n3}
\end{table}}

By using (\ref{sympart}), we get that $\rg_{4,\mu,\lambda}'$, $\lambda\ne 0$, is
isometric to $\rg_{4,\lambda/\mu,\lambda/\mu}$ and that $\sg_{4,\lambda}'$ is
isometric to $\sg_{4,1/2}$ for any $\lambda>0$.

One can see in Table \ref{n3} that $\rg_{4,-1/2}$ is the only unimodular solvable
Lie algebra of dimension $4$ which does not admit a solsoliton.  It follows however
from the results in \cite{Hng} that $\rg_{4,-1/2}$ does not either admit a {\it
lattice} (i.e. cocompact discrete subgroup).  Thus the universal cover of any
$4$-dimensional compact solvmanifold $S/\Gamma$ does admit a solsoliton, a result
which has already been proved in \cite{IsnJckLu}.  This is essentially due to the
fact from algebraic number theory that any $A\in\Sl_3(\ZZ)$ with positive real
eigenvalues and at least one of them different from $1$ is necessarily
diagonalizable over $\CC$.  The next example shows that this is no longer true for
compact solvmanifolds of dimension $\geq 5$.

\begin{example}
Let $\sg=\RR A\oplus\ngo$ be the solvable Lie algebra defined by: $\ngo$ is abelian,
$\dim{\ngo}=4$ and
$$
\ad{A}|_{\ngo}= \left[\begin{smallmatrix} 0&1&&\\ 0&0&&\\ &&\ln{\lambda}&\\
&&&-\ln{\lambda}\end{smallmatrix}\right], \qquad\lambda=\tfrac{3+\sqrt{5}}{2}.
$$
Thus there exists $\sigma\in\Gl_4(\RR)$ such that
$$
\sigma e^{\ad{A}|_{\ngo}}\sigma^{-1} =  \left[\begin{smallmatrix} 1&1&&\\ 0&1&&\\
&&2&1\\ &&1&1\end{smallmatrix}\right]\in\Sl_4(\ZZ),
$$
and it is therefore easy to check that $\Gamma:=\ZZ\ltimes\sigma^{-1}\ZZ^4$ is a
lattice of the solvable Lie group $S$ with Lie algebra $\sg$.  Recall that
$S=\RR\ltimes\RR^4$ with multiplication given by
$$
(t,X).(s,Y) = (t+s, X+e^{t\ad{A}|_{\ngo}}Y), \qquad t,s\in\RR, \quad X,Y\in\RR^4.
$$
However, since $\ad{A}|_{\ngo}$ is not diagonalizable over $\CC$, we conclude from
Theorem \ref{main} that $S$ can never admit a solsoliton.
\end{example}


\begin{thebibliography}{MMMM}

\bibitem[A]{Alk} {\sc D. Alekseevskii}, Conjugacy of polar factorizations of Lie groups, {\it Mat. Sb.} {\bf 84} (1971), 14-26; {\it English translation}: {\it Math. USSR-Sb.} {\bf 13} (1971), 12-24.

\bibitem[AK]{AlkKml} {\sc D. Alekseevskii, B. Kimel'fel'd}, Structure of homogeneous Riemannian spaces with zero Ricci curvature, {\it Funktional Anal. i Prilozen} {\bf 9} (1975), 5-11 (English translation: {\it Functional Anal. Appl.} {\bf 9} (1975), 97-102.

\bibitem[ABDO]{AndBrbDttOvn} {\sc A. Andrada, L. Barberis, I. Dotti, G. Ovando}, Product structures on four dimensional solvable Lie algebras, {\it Homol. Homot. Appl.} {\bf 7} (2005), 9-37.

\bibitem[AT]{AylTir} {\sc V. Ayala, J. Tirao}, Linear control systems on Lie groups and controllability, {\it Proc. Symp. Pure Math.} {\bf 64} (1999), 47-64.

\bibitem[AW]{AznWls} {\sc R. Azencott, E. Wilson}, Homogeneous manifolds with negative curvature I,
{\it Trans. Amer. Math. Soc.} {\bf 215} (1976), 323-362.

\bibitem[BD]{BrdDnl} {\sc P. Baird, L. Danielo}, Three-dimensional Ricci solitons which project to surfaces, {\it J. reine angew. Math.} {\bf 608} (2007), 65-91.

\bibitem[B]{Bss} {\sc A. Besse}, Einstein manifolds, {\it Ergeb. Math.} {\bf 10} (1987), Springer-Verlag,
Berlin-Heidelberg.

\bibitem[BWZ]{BhmWngZll} {\sc C. B$\ddot{{\rm o}}$hm, M.Y. Wang, W. Ziller}, A variational approach for
compact homogeneous Einstein manifolds, {\it Geom. Funct. Anal.} {\bf 14} (2004),
681-733.

\bibitem[C]{libro}  {\sc B. Chow, S.-C. Chu, D. Glickenstein, C. Guenther, J. Isenberg,
T, Ivey, D. Knopf, P. Lu, F. Luo, L. Ni}, The Ricci flow: Techniques and
Applications, Part I: Geometric Aspects, {\it AMS Math. Surv. Mon.} {\bf 135}
(2007), Amer. Math. Soc., Providence.

\bibitem[EJ]{EbrJbl} {\sc P. Eberlein, M. Jablonski}, Closed orbits of semisimple group actions and the real Hilbert-Mumford function, {\it Contemp. Math.} {\bf 491} (2009), 283-321.

\bibitem[GW]{GrdWls} {\sc C. Gordon, E. Wilson}, Isometry groups of Riemannian solvmanifolds,
{\it Trans. Amer. Math. Soc.} {\bf 307} (1988), 245-269.

\bibitem[H]{Hbr} {\sc J. Heber}, Noncompact homogeneous Einstein spaces, {\it Invent. math}. {\bf 133} (1998), 279-352.

\bibitem[HSS]{HnzSchStt} {\sc P. Heinzner, G. Schwarz, H. St$\ddot{{\rm o}}$etzel}, Stratifications with respect to actions of real reductive groups, {\it Compositio Math.}  {\bf 144} (2008), 163-185.

\bibitem[Hn]{Hng} {\sc H. Huang}, Lattices and harmonic analysis on some $2$-step solvable Lie groups, {\it J. Lie Theory} {\bf 13} (2003), 77-89.

\bibitem[IJL]{IsnJckLu} {\sc J. Isenberg, M. Jackson, Peng Lu}, Ricci flow on locally homogenous closed $4$-manifolds, {\it Comm. Anal. Geom.} {\bf 14} (2006), 345-386.

\bibitem[I]{Ivy} {\sc T. Ivey}, Ricci solitons on compact three-manifolds, {\it Diff. Geom. Appl.} {\bf 3} (1993), 301-307.

\bibitem[Jc]{Jcb} {\sc N. Jacobson}, Lie algebras, {\it Interscience Publishers}, New York (1962).

\bibitem[KN]{KmpNss} {\sc G. Kempf, L. Ness}, The length of vectors in representation spaces,
{\it Lect. Notes in Math.} {\bf 732} (1979), 233-243.

\bibitem[K]{Krw1} {\sc F. Kirwan}, Cohomology of quotients in symplectic and algebraic
geometry, {\it Mathematical Notes} {\bf 31} (1984), Princeton Univ. Press,
Princeton.

\bibitem[L1]{soliton} {\sc J. Lauret}, Ricci soliton homogeneous nilmanifolds,
{\it Math. Annalen} \textbf{319} (2001), 715-733.

\bibitem[L2]{minimal} \bysame, A canonical compatible metric for geometric structures on
nilmanifolds, {\it Ann. Global Anal. Geom.} {\bf 30} (2006), 107-138.

\bibitem[L3]{standard}  \bysame, Einstein solvmanifolds are standard, {\it Ann. of Math.}, in press,
{\it arXiv:} math.DG/0703472.

\bibitem[L4]{cruzchica}  \bysame, Einstein solvmanifolds and nilsolitons, {\it Contemp. Math.} {\bf 491} (2009), 1-35.

\bibitem[LW]{einsteinsolv}  {\sc J. Lauret, C.E. Will}, Einstein solvmanifolds: existence
and non-existence questions, preprint 2006, {\it arXiv:} math.DG/0602502.

\bibitem[Lt]{Ltt07}  {\sc J. Lott}, Dimensional reduction and the long-time behavior of Ricci flow, {\it Comm. Math. Helv.}, in press, {\it
 arXiv:} math.DG/0711.4063.

\bibitem[M]{Mrn} {\sc A. Marian}, On the real moment map, {\it Math. Res. Lett.} {\bf 8} (2001), 779-788.

\bibitem[Ml]{Mln} {\sc J. Milnor}, Curvature of Left-invariant Metrics on Lie Groups, {\it Adv. Math.} {\bf 21}(1976), 293-329.

\bibitem[MFK]{Mmf} {\sc D. Mumford, J. Fogarty, F. Kirwan}, Geometric invariant theory, Third Edition,
Springer Verlag (1994).

\bibitem[N]{Nbr}  {\sc A. Naber}, Noncompact shrinking four solitons with nonnegative curvature, preprint 2008,
{\it arXiv:} math.DG/0710.5579.

\bibitem[Ns]{Nss} {\sc L. Ness}, A stratification of the null cone via the momentum map, {\it Amer. J. Math.} {\bf 106} (1984), 1281-1329 (with an appendix by D. Mumford).

\bibitem[P]{Prl} {\sc G. Perelman}, The entropy formula for the Ricci flow and its geometric applications, preprint 2002, {\it arXiv:} math.DG/0211159.

\bibitem[PW]{PtrWyl}  {\sc P. Petersen, W. Wylie}, On gradient Ricci solitons with symmetry,  {\it Proc. Amer. Math. Soc.} {\bf 137} (2009), 2085-2092.

\bibitem[RS]{RchSld} {\sc R.W. Richardson, P.J. Slodowy}, Minimum vectors for real reductive algebraic groups,
{\it J. London Math. Soc. (2)} {\bf 42} (1990), 409-429.

\bibitem[W]{Wyl} {\sc W. Wylie}, Complete shrinking Ricci solitons have finite fundamental group, {\it Proc. Amer. Math. Soc.} {\bf 136} (2008), 1803-1806.

\end{thebibliography}
\end{document}